\documentclass{amsart}

\usepackage{amssymb,amsfonts}
\usepackage{graphicx}
\usepackage[square,sort,comma,numbers]{natbib}
\usepackage{url}
\usepackage{subcaption}
\usepackage[all,arc]{xy}
\usepackage{caption}
\usepackage{enumerate}
\usepackage{multicol}
\usepackage{mathrsfs}
\usepackage{tikz,graphicx}
\usepackage[percent]{overpic}
\newtheorem{thm}{Theorem}[section]

\newtheorem{prop}[thm]{Proposition}
\newtheorem{lem}[thm]{Lemma}

\usepackage{graphicx}
\theoremstyle{definition}

\newtheorem{exmp}[thm]{Example}

\theoremstyle{remark}
\newtheorem{rem}[thm]{Remark}

\newcommand{\ra}{\rightarrow}
\newcommand{\vb}{\text{vb}}
\makeatletter
\let\c@equation\c@thm
\makeatother
\numberwithin{equation}{section}

\bibliographystyle{plain}

\title{Wirtinger numbers for virtual links}

\author{Puttipong Pongtanapaisan}


\begin{document}

\begin{abstract}
The Wirtinger number of a virtual link is the minimum number of generators of the link group over all meridional presentations in which every relation is an iterated Wirtinger relation arising in a diagram. We prove that the Wirtinger number of a virtual link equals its virtual bridge number. Since the Wirtinger number is algorithmically computable, it gives a more effective way to calculate an upper bound for the virtual bridge number from a virtual link diagram. As an application, we compute upper bounds for the virtual bridge numbers and the quandle counting invariants of virtual knots with 6 or fewer crossings. In particular, we found new examples of nontrivial virtual bridge number one knots, and by applying Satoh's Tube map to these knots we can obtain nontrivial weakly superslice links.
\end{abstract}
\maketitle
\section{Introduction}
Virtual knots were introduced by Kauffman \cite{MR1721925} as a generalization of classical knot theory, and since then many invariants have been developed to help distinguish virtual knots. One can represent virtual knots geometrically as knots in thickened surfaces up to stable equivalence. Therefore, an oriented virtual knot invariant is also an invariant of a knot in a thickened surface. Among these invariants, the virtual bridge number has been studied in \cite{MR3334661,MR2468375,MR3024023,MR2812268,MR3105303,MR3431020}. A naive way to determine an upper bound for the virtual bridge number is to consider a virtual knot diagram from a knot table, and count the number of overbridges in the diagram. However, since diagrams from knot tables are crossing number minimizing and not necessarily bridge number minimizing, one can get upper bounds that are much larger than the actual virtual bridge numbers. Previously, more accurate upper bounds were obtained by performing a sequence of extended Reidemeister moves on virtual knot diagrams to reduce the number of overbridges. Finding such a sequence of moves can be time-consuming and difficult. This motivates the search for an alternative way to obtain stronger upper bounds from the diagrams without having to perform any extended Reidemeister moves.

In \cite{blair2017wirtinger}, the authors defined the Wirtinger number of a classical link in 3-space to be the minimum number of meridional generators of the link group where all the relations in the group presentation are iterated Wirtinger relations in the link diagram and showed that it equals the bridge number of the link. This result has some beneficial consequences. First, the Wirtinger number of a link is bounded below by the meridional rank of the link group. Therefore, the main theorem of \cite{blair2017wirtinger} gave rise to an alternative approach to Cappell and Shaneson's Meridional Rank Conjecture \cite{(Ed.)95problemsin}, which asks if the bridge number of a knot equals the meridional rank of the knot group. In particular, the conjecture is true if every link admits a minimal meridional presentation in which all relations arise as iterated Wirtinger relations in a diagram. Furthermore, the Wirtinger number is algorithmically computable and gives rise to a useful combinatorial tool to obtain strong upper bounds on classical bridge numbers from knot diagrams without having to perform Reidemeister moves. This allowed the authors in \cite{blair2017wirtinger} to determine the bridge numbers of nearly half a million classical knots.

In this paper, we extend the notion of the Wirtinger number to virtual links. We show that the Wirtinger number equals the virtual bridge number. As an application, we compute upper bounds for the virtual bridge numbers of virtual knots with 6 or fewer crossings. From these upper bounds, we can obtain further information about other virtual and classical knot invariants. For instance, if $X$ is a finite quandle, then an upper bound for the virtual bridge number of a knot is also an upper bound for the number of coloring of the knot by $X$. In particular, for virtual bridge number one knots, there are only $|X|$ colorings of the knot by $X$, where $|X|$ denotes the order of $X$. Moreover, by applying Satoh's Tube map to these knots we can obtain interesting embeddings of unknotted ribbon tori in the 4-sphere, and considering cross-sections of these tori leads to diagrams of nontrivial weakly superslice link. The proofs presented here are inspired by \cite{blair2017wirtinger}.
\section{Preliminaries}

\subsection{Virtual Links} 
In this section, we recall several equivalent definitions of virtual links. The first definition is in terms of a virtual link diagram. A \textit{virtual link diagram} is an immersion of $n$ circles into the 2-sphere such that each double point is marked as either a classical crossing or a virtual crossing (see Figure 1). A \textit{virtual link} is an equivalence class of virtual link diagrams under planar isotopies and the \textit{extended Reidemeister moves} shown in Figure 2. \\

\begin{figure}[!ht]
  \centering
    \includegraphics[width=0.4\textwidth]{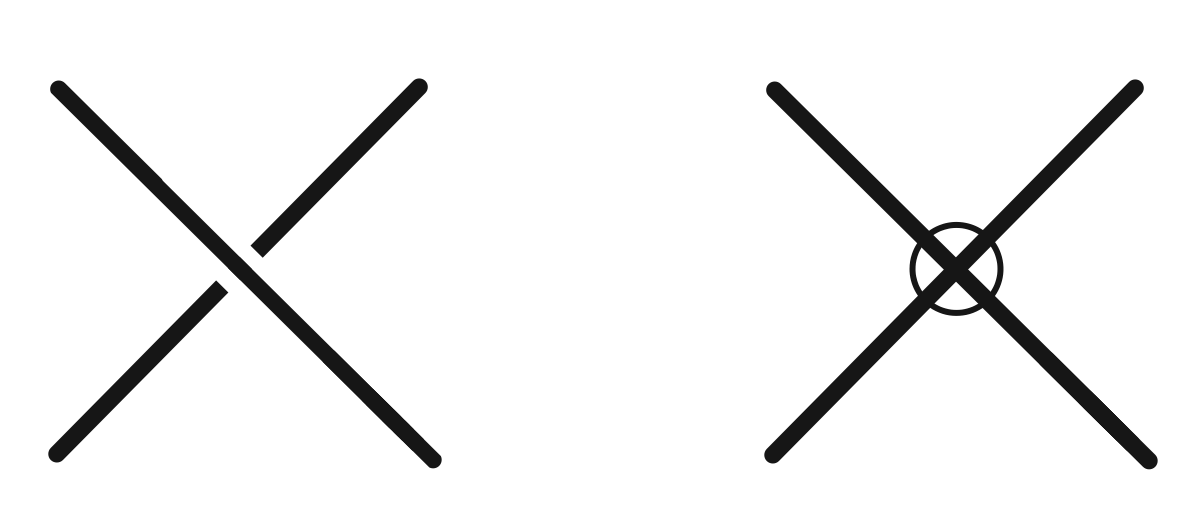}
        \caption{(Left) A classical crossing. (Right) A virtual crossing.}
\end{figure}

\begin{figure}[!ht]
  \centering
    \includegraphics[width=0.7\textwidth]{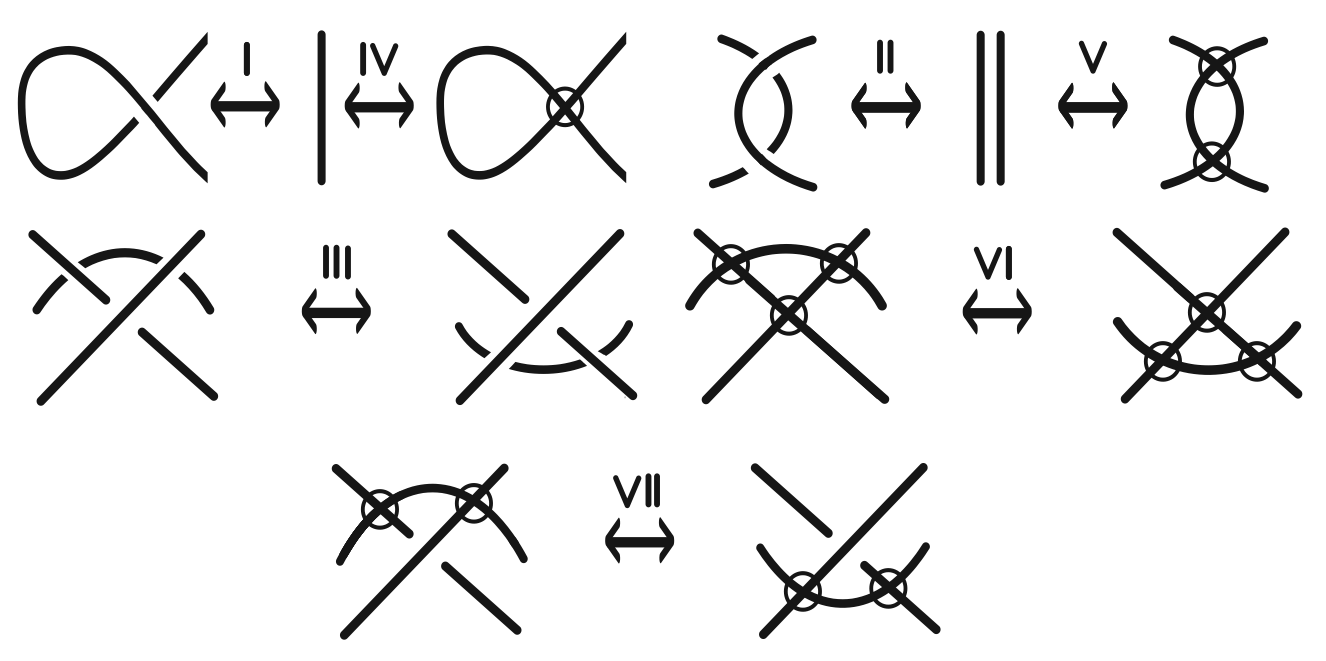}
        \caption{Extended Reidemeister moves.}
\end{figure}

A virtual link diagram can be represented as a link diagram in an oriented surface $\Sigma$ by adding handles to the sphere where the diagram is drawn to \textit{desingularize} the virtual crossings (see Figure 3). We may assume that $\Sigma$ is connected because we can take the connected sum of the components if $\Sigma$ is not connected after desingularization. 
It is shown in \cite{MR1905687} that one can regard a virtual link as a link diagram in $\Sigma$ up to Reidemeister moves on the diagram, orientation-preserving homeomorphisms of the surface, stabilizations, and destabilizations. The stabilization operation consists of removing two open disks in $\Sigma$ disjoint from the link diagram, and then joining the resulting boundary components by an annulus. The destabilization operation consists of cutting $\Sigma$ along a simple closed curve disjoint from the link diagram, and then capping off the resulting boundary components with a pair of disks. It is well-known that one can also regard a virtual link as an embedded link in thickened surfaces up to ambient isotopies, stabilizations, and destabilizations. Furthermore, Kuperberg \cite{MR1997331} showed that there exists a unique link in a thickened surface of minimum genus corresponding to each virtual link. 

\begin{figure}[!ht]
  \centering
  \includegraphics[width=0.5\textwidth]{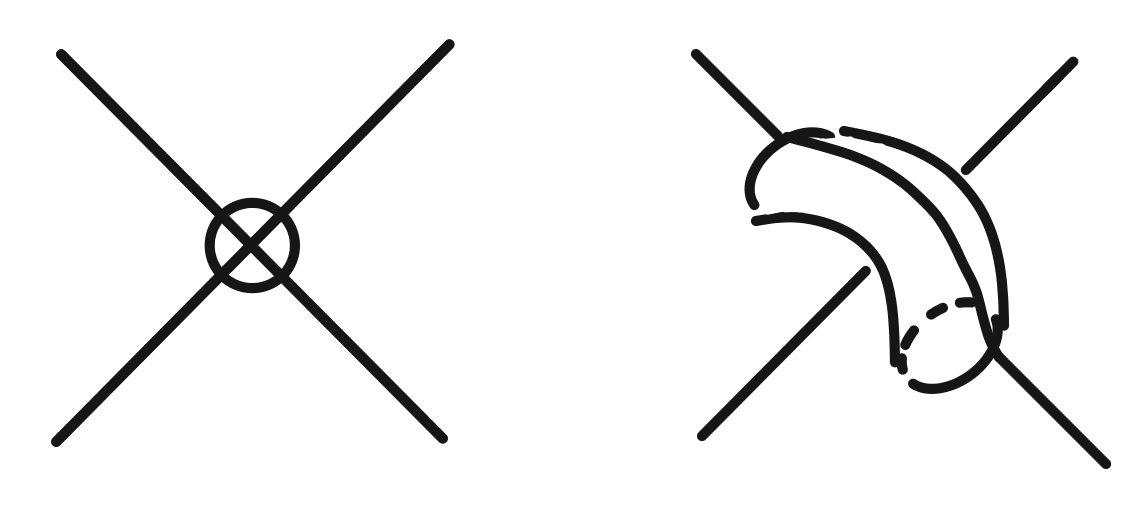}
        \caption{Desingularizing virtual crossings.}
\end{figure}

The final definition is in terms of Gauss diagrams. Given an oriented virtual link diagram $p:S^1\sqcup S^1 \sqcup \cdots \sqcup S^1\ra \mathbb{R}^2$, its \textit{Gauss diagram} $D$ is a decoration of the oriented circles in the domain of $p$ such that the pre-images of the classical crossings are connected by \textit{chords}, which are signed arrows starting from the over crossing to the under crossing. The sign of the arrow indicates the sign of a crossing using the right hand rule. The classical Reidemeister moves can be translated to moves on the Gauss diagrams. Virtual links are then in one-to-one correspondence with Gauss diagrams modulo the Reidemeister moves \cite{goussarov2000finite}. See Figure 4 for an example of a virtual link diagram, and its corresponding Gauss diagram. It is well-known that a Gauss diagram does not always represent a classical link, but every Gauss diagram corresponds to some virtual link. In a sense, Gauss diagrams give simpler representations of virtual links than virtual link diagrams since virtual crossings are not present. Therefore, we state our results mostly in terms of Gauss diagrams. 

\begin{figure}[!ht]
  \centering
    \includegraphics[width=0.8\textwidth]{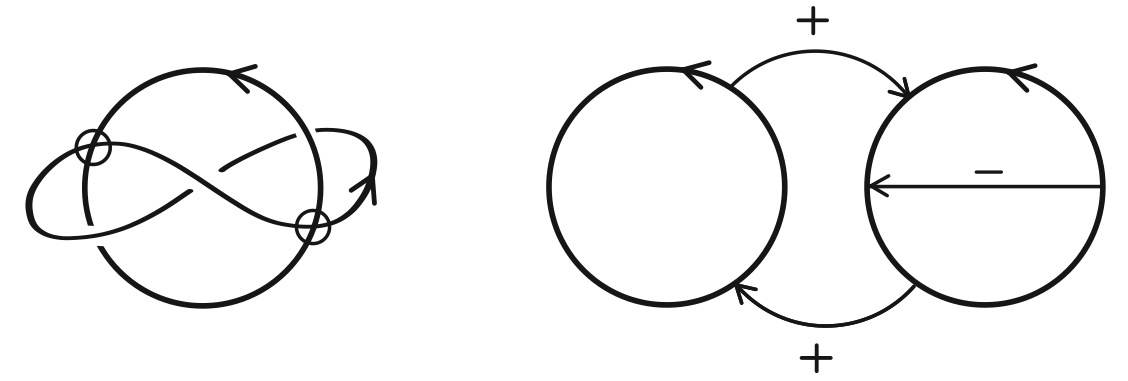}
        \caption{A virtual link diagram and its Gauss diagram.}
\end{figure}

\subsection{Virtual Bridge Number}

Let $D$ be a Gauss diagram for a virtual link. A \textit{strand} is a subarc of a circle component from one arrowhead to the next. Observe that a strand contains a finite number (possibly zero) of arrowtails, but does not contain any arrowheads. Two strands are said to be \textit{adjacent} if they are separated by an arrowhead. An \textit{overbridge} is a strand with at least one arrowtail on it. The \textit{bridge number} of $D$ is the number of overbridges of $D$, denoted vb$(D).$ If $L$ is a virtual link, then the \textit{virtual bridge number} of $L$, denoted $\vb(L)$ is the minimum bridge number taken over all Gauss diagrams $D$ of $L$. For example, the Gauss diagram in Figure 4 has two overbridges. It is a well-established fact that there is only one classical link $L$ with $\vb(L) =1$, but there are infinitely many virtual knots whose virtual bridge numbers are equal to one \cite{MR2468375}.

\begin{rem}
In this paper, we only consider Gauss diagrams where each circle component contains at least one arrowtail. If there is a circle component with no arrowtails, we can always add a trivial overbridge by performing the first Reidemeister move on the circle component. In particular, if $L$ is the $n$-component unlink, then $\vb(L) = n$. 
\end{rem}

\subsection{Link Group}

Given a Gauss diagram $D$ for a link $L$ with $n$ strands, a presentation of the link group $G_L$ is given by the following construction. The generators of $G_L$ consist of the strands of $D$. Each chord gives rise to a relation. Suppose that a circle component contains $m$ arrowheads coming from chords $c_1,c_2,...,c_m$. These $m$ chords divide the circle component into $m$ strands $a_1,...,a_m$. We order the chords $c_i$ and strands $a_i$ consistently so that the arrowhead of $c_i$ separates $a_i$ from $a_{i+1}$, modulo $n$. If the arrowtail of $c_i$ lies on the strand $b$, we impose the relation $a_{i+1}=b^{\epsilon_i}a_ib^{-\epsilon_i}$, where $\epsilon_i$ is the sign of $c_i$.
\subsection{Wirtinger Number}

We say that $D$ is \textit{$k$-partially colored} if $k$ distinct colors have been assigned to a subset of the strands of $D$. Suppose that $D_1$ is $k$-partially colored. Let $c_p$ be a chord in $D_1$ whose arrowtail lies on a colored strand $a_r$.  Suppose further that a strand $a_p$ on one side of the arrowhead of $c_p$ is colored, and the strand $a_q$ on other side of the arrowhead of $c_p$ is not colored. Then, we may extend the color on $a_p$ to $a_q$ to obtain a new $k$-partially colored diagram $D_2$. The process of extending a color in this fashion is called a \textit{coloring move}, which we denote by $D_1 \ra D_2$. Figure 5 demonstrates this process where each chord can take any signs.

For a Gauss diagram $D$ with $n$ strands, we say that $D$ is \textit{$k$-meridionally colorable} if there exists a $k$-partially colored diagram $D_0$ with $k$ colored strands $a_{i_1},...,a_{i_k}$, and a sequence of coloring moves $D_0 \ra D_1 \ra \cdots \ra D_{n-k}.$ We call the strands $a_{i_1},...,a_{i_k}$ of $D_0$ the \textit{seed strands}. The \textit{Wirtinger number} of $D$, denoted $\omega(D)$, is the minimum value of $k$ such that $D$ is $k$-meridionally colorable. Now, let $L$ be a virtual link. The \textit{Wirtinger number} of $L$, denoted $\omega(L)$, is the minimal value of $\omega(D)$ over all Gauss diagrams $D$ representing $L$.\\

\begin{figure}[!ht]
  \centering
    \includegraphics[width=0.6\textwidth]{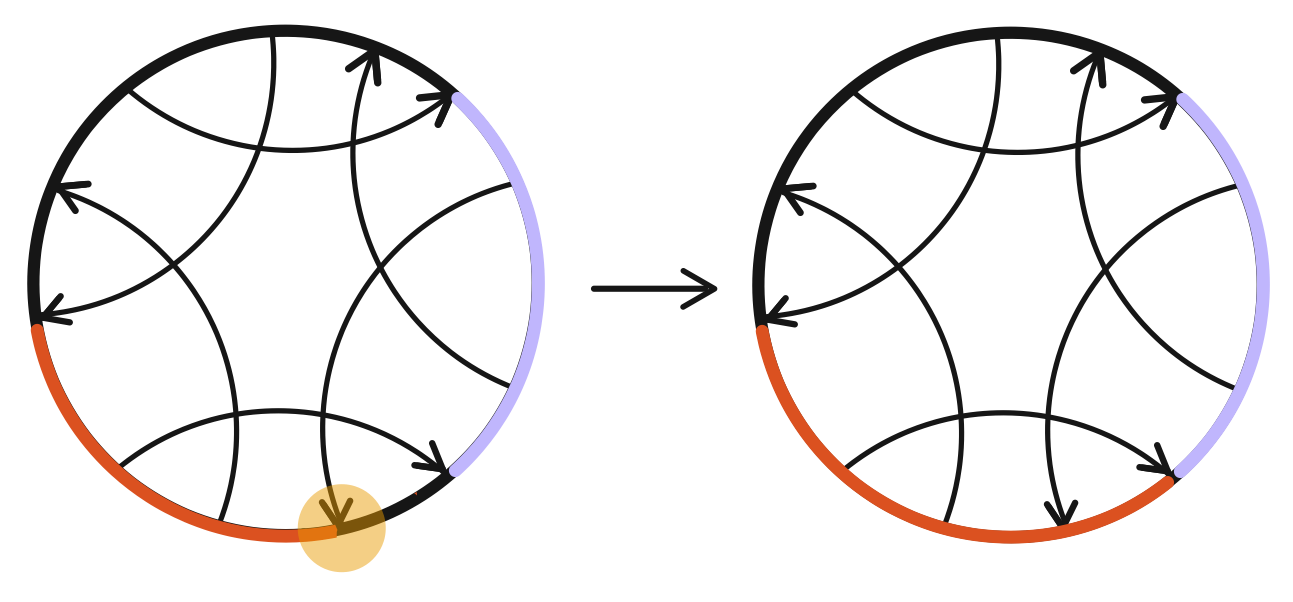}
        \caption{A coloring move.}
\end{figure}

It is useful to record the order in which strands are colored. Suppose that $D$ is $k$-meridionally colorable with seed strands $\lbrace a_{i_1},a_{i_2},...,a_{i_k}\rbrace$, and a sequence of coloring moves $D_0 \ra D_1 \ra \cdots \ra D_{n-k}.$ We associate to these coloring moves the \textit{coloring sequence} $\lbrace \alpha_j \rbrace_{j=1}^{n}$ given by $\alpha_j = a_{i_j}$ for $1 \leq j \leq k$. For, $k+1 \leq j \leq n$, we define $\alpha_j$ to be the strand that is colored in $D_{j-k}$, but not colored in $D_{j-(k+1)}$. Furthermore, given a coloring sequence $\{ \alpha_j\}$ we associate to it a \textit{height function} $h : \{ \text{{strands} of $D$} \} \ra \mathbb{R}$ by $h(\alpha_j) = \frac{1}{j+1}$. Given a set of strands $\{ a_i \}_{i=1}^n$ ordered by adjacency, we say that $h$ has a \textit{local maximum} at a strand $a_j$ if the function $h':\lbrace 1,2,...,n \rbrace \ra \mathbb{R}$ defined by $h'(i) = h(a_i)$ has a local maximum at $j$. Observe that the \textit{seed strands} $ a_{i_1},...,a_{i_k}$ generate the link group via iterated application of Wirtinger relations in $D$.

\section{Wirtinger Number and Bridge Number}
In this section, we prove that for a virtual link $L$, its Wirtinger number equals its virtual bridge number. Since a Gauss diagram represents some virtual link, we extend the results proved in \cite{blair2017wirtinger} and rephrase them in terms of Gauss diagrams. We begin by studying the case of knots.

\begin{lem}
Let $D$ be a Gauss diagram of a nontrivial virtual knot. Suppose that the arrowhead of a chord $c_p$ separates $a_p$ from $a_{q}$, and the arrowtail of $c_p$ lies on the strand $a_r$. Further, suppose that both $a_p$ and $a_q$ get assigned the same color $j$ at the end of the coloring process. If $h(a_r) \leq \min\{h(a_p),h(a_{q})\}$, then $\omega(D) = 1$, and $c_p$ is the unique chord with the property that $h(a_r) \leq \min\{h(a_p),h(a_q)\}$.
\end{lem}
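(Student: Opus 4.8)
The plan is to work with a minimal meridional coloring of $D$ — so it realizes $\omega(D)=k$ — together with its coloring sequence $\alpha_1,\dots,\alpha_n$ and height function $h$, and to prove that $k=1$; the uniqueness clause will then come for free. The starting point is the elementary timing constraint behind a coloring move: if a chord $c$ is used in the move that colors a strand $v$ from the adjacent strand $u$, then both $u$ and the arrowtail strand $w$ of $c$ have already been colored, so $h(w)>h(v)\geq\min\{h(u),h(v)\}$, and symmetrically if $c$ colors $u$ from $v$. Hence any chord whose arrowtail strand $a_r$ satisfies $h(a_r)\leq\min\{h(a_p),h(a_q)\}$, where $a_p,a_q$ are the strands at its arrowhead, is never used; in particular $c_p$ is unused. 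A knot Gauss diagram has exactly as many chords as strands, and the $n-k$ coloring moves use $n-k$ distinct chords, so exactly $k$ chords are unused. Thus, once $k=1$ is established, $c_p$ is the unique unused chord, and since by the same observation no used chord satisfies the displayed inequality, $c_p$ is the unique chord with that property; this settles the uniqueness statement.

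It remains to rule out $k\geq2$. Assuming $k\geq2$, I would construct a coloring of $D$ with only $k-1$ seeds, contradicting minimality. Let $\gamma$ be the common color of $a_p$ and $a_q$, carried by a seed $s_\gamma$, and relabel so that $h(a_q)=\min\{h(a_p),h(a_q)\}$. Distinct seeds receive distinct colors, so $a_q$ is not a seed (otherwise $a_p$, with $h(a_p)\geq h(a_q)$, would also be a seed of color $\gamma$); and since $c_p$ is unused, $a_q$ must have been colored across the arrowhead on its far side from $a_p$, from a neighbor $a_q''$ that therefore also carries color $\gamma$. Hence $a_p,a_q,a_q''$ are three consecutive $\gamma$-colored strands, $a_q$ is a local minimum of $h$, and $c_p$ is a spare chord joining the $\gamma$-colored pair $a_p,a_q$, its arrowtail lying on a strand $a_r$ that is colored last among $a_p,a_q,a_r$. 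The idea is then to use $c_p$ together with this local minimum to re-root the color-$\gamma$ tree: since $a_p$ and $a_q$ both ultimately carry color $\gamma$ and are already joined by the spare chord $c_p$, the color $\gamma$ can be supplied from an adjacent color tree rather than planted at $s_\gamma$, producing a coloring of $D$ with $k-1$ seeds — the desired contradiction, so $k=1$.

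The re-rooting step is the main obstacle. One has to verify that deleting $s_\gamma$ from the set of seeds invalidates no subsequent coloring move — that there is no cascade of failures — and one has to dispose separately of the degenerate possibilities that $a_r\in\{a_p,a_q\}$, that $a_r$ lies in the color-$\gamma$ tree, or that $n$ is very small, together with the sign cases on $c_p$ and on the chords that color $a_q$ and $a_q''$. This is also where the hypothesis that the virtual knot is nontrivial should enter, to exclude the configuration in which the reduction would instead trivialize $D$. These are precisely the bookkeeping steps in which the argument of \cite{blair2017wirtinger}, transcribed from link diagrams to Gauss diagrams, does its real work.
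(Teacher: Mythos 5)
Your first paragraph is sound: the observation that a used chord $c$ (coloring $v$ from $u$, arrowtail on $w$) forces $h(w)>\min\{h(u),h(v)\}$, hence that $c_p$ is unused, is correct; and the count ``$n$ chords, $n-k$ moves using $n-k$ distinct chords, so $k$ unused chords'' is a valid and rather elegant way to get uniqueness once $k=1$ is known --- cleaner, in fact, than the paper's uniqueness argument. The problem is the second half: you never actually prove $k=1$, and the route you sketch for it would not work.

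The ``re-rooting'' step is not a bookkeeping detail you can defer; it is the entire content of the lemma, and as described it is misguided. Deleting the seed $s_\gamma$ cannot produce a $(k-1)$-coloring by having $\gamma$ ``supplied from an adjacent color tree'': coloring moves only propagate an existing color across an arrowhead whose arrowtail strand is already colored, so there is no mechanism for one color class to absorb another, and no reason the requisite arrowtail strands would be colored in time. If such a reduction were generally available it would drive every Wirtinger number to $1$. Moreover, the lemma does not assert that a minimal coloring with this property can be improved; it asserts that \emph{any} coloring with this property already uses only one color --- a direct statement, not one proved by contradiction with minimality.

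You are in fact one sentence away from the actual argument, and you already assembled its ingredients. You correctly deduced that $a_q$ (with $h(a_q)<h(a_p)$) cannot be colored across $c_p$, so it is colored from its other neighbor $a_q''$, and that $a_p$, $a_q$, $a_q''$ all carry color $\gamma$ with $a_q$ colored last among them. Now use the fact that at every stage the set of strands carrying a fixed color is a connected subset of the circle (this is the easy induction you alluded to in your first paragraph but did not exploit). At the stage just before $a_q$ is colored, the $\gamma$-colored set contains both neighbors $a_p$ and $a_q''$ of $a_q$ but not $a_q$ itself; a connected subset of a circle containing both endpoints of the complementary arc of $a_q$ must be that entire arc. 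Hence every strand except $a_q$ is already colored $\gamma$, so the coloring uses a single color and $k=1$. (One then also gets $a_r=a_q$, since $a_r$ is uncolored at that stage.) The nontriviality hypothesis enters only to guarantee $a_p\neq a_q$, i.e.\ that the diagram has more than one chord --- not to exclude some trivializing reduction.
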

\begin{proof}
Since $D$ represents a nontrivial virtual knot, $a_p \neq a_{q}$. Suppose that $h(a_p) > h(a_q) \geq h(a_r)$. Let $\delta_{q-1}$ denote the stage right before $a_q$ receives a color. Since $a_r$ is not colored at the stage $\delta_{q-1}$, this implies that $a_{q}$ must receive its color from some other strand $a_l$. Note that $a_q$ is adjacent to both $a_l$ and $a_p$. The condition that $a_{r}$ is not colored at stage $\delta_{q-1}$, and the fact that the strands of a Gauss diagram of a virtual knot lie on a circle force $a_l$ to be distinct from $a_p$. It is an easy exercise to check that at any stage of the coloring process, the set of strands of $D$ that receive the same color must be connected. Therefore, at stage $\delta_{q-1}$, the strands $a_l$ and $a_p$ will both be colored $j$. In particular, all strands of $D$ except $a_q$ are assigned the color $j$. Now, if $a_r\neq a_q$, we arrive at a contradiction because $h(a_r) \leq h(a_q)$ by assumption. Thus, $a_r$ and $a_q$ are the same strand. This implies that $D$ is 1-meridionally colorable. Furthermore, $a_q$ is the last strand in the Gauss diagram that gets colored. Thus, $c_p$ is the only chord with the property that $h(a_r) \leq \min\{h(a_p),h(a_q)\}$.
\end{proof}

We would like to have a result similar to Lemma 3.1 for virtual links as well. A Gauss diagram $D$ is called \textit{cut-split} if there exist two strands $a_p$ and $a_q$ that are adjacent at some arrowhead of $D$ such that $a_p = a_q$ or if $D$ contains a circle with no chords. For example, the Gauss diagram in Figure 4 is cut-split. The following lemma is the analog of Lemma 3.1 for Gauss diagrams that are not cut-split.

\begin{lem}
Let $D$ be a Gauss diagram of a virtual link that is not cut-split. Suppose that the arrowhead of a chord $c_p$ separates $a_p$ from $a_{q}$, and the arrowtail of $c_p$ lies on the strand $a_r$. If both $a_p$ and $a_q$ get assigned the same color $j$ at the end of the coloring process, then one of the following holds:\\
\begin{enumerate}
\item $h(a_r) > \min\{h(a_p),h(a_{q})\}$ 
\item The strands that get assigned the color $j$ at the end of the coloring process form one circle component $U$ of the Gauss diagram, and $c_p$ is the unique chord having an arrowhead on $U$ with the property that $h(a_r) \leq \min\{h(a_p),h(a_{q})\}$ 
\end{enumerate}

\end{lem}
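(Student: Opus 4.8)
The plan is to run the argument in the proof of Lemma~3.1 while keeping track of which circle component carries the color~$j$. Assume alternative~(1) fails, i.e.\ $h(a_r)\le\min\{h(a_p),h(a_q)\}$. Since $D$ is not cut-split, $a_p\neq a_q$, and since the height function is injective on strands we may assume, using the symmetry between the two sides of the arrowhead of $c_p$, that $h(a_q)<h(a_p)$; hence $h(a_r)\le h(a_q)$. The one input I use throughout is the ``easy exercise'' from the proof of Lemma~3.1: at every stage of the coloring process each color class is connected. Because $D$ is not cut-split, every circle component carries at least two arrowheads, so the adjacency graph of $D$ is a disjoint union of cycles, one per component; therefore a connected color class lies on a single circle component and is a contiguous arc there.

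Let $\delta_{q-1}$ be the stage just before $a_q$ receives its color. Since $h(a_r)\le h(a_q)$, the strand $a_r$ is not colored at $\delta_{q-1}$, so the move coloring $a_q$ is not the move through $c_p$; hence $a_q$ takes the color of the strand $a_l$ lying on the far side of the other arrowhead bounding $a_q$, and $a_l$ already carries color~$j$ at $\delta_{q-1}$. The strand $a_p$ also carries color~$j$ at $\delta_{q-1}$, being colored before $a_q$. Let $U$ be the circle component containing $a_p$. At stage $\delta_{q-1}$ the color-$j$ class is connected, contains $a_p$ and $a_l$, and omits $a_q$; since $a_p$ and $a_l$ are the two endpoints of the path $U\setminus\{a_q\}$, the color-$j$ class at $\delta_{q-1}$ is exactly $U\setminus\{a_q\}$. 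Coloring $a_q$ next makes the color-$j$ class all of $U$, and since every strand adjacent to a strand of $U$ again lies on $U$, no subsequent move can push color~$j$ off $U$; hence the final color-$j$ class is precisely the strand set of the single circle component $U$, which is the first half of~(2).

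For the uniqueness statement I would count. The circle $U$ carries $m\ge 2$ arrowheads, hence $m$ strands and $m$ chords with an arrowhead on $U$; exactly one strand of $U$ is a seed strand (the seed of color~$j$, since every strand of $U$ ends colored~$j$ and a seed strand keeps its color, while its class grows from that single seed), so each of the remaining $m-1$ strands of $U$ is colored by some coloring move. The chord used in such a move has its arrowhead at an end of the strand being colored, hence on $U$; and a chord with arrowhead on $U$ can be used to color at most one of the two strands flanking that arrowhead, since after its first use both of them are colored forever. Thus ``the chord that colored it'' is an injection from the $m-1$ non-seed strands of $U$ into the $m$ chords with an arrowhead on $U$, and there is exactly one chord $c^{*}$ with arrowhead on $U$ that never colors a strand of $U$. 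One checks $c^{*}=c_p$: the move through $c_p$ coloring $a_q$ would need $a_r$ already colored, which fails at $\delta_{q-1}$, and the move through $c_p$ coloring $a_p$ would need $a_q$ already colored, which fails since $a_p$ precedes $a_q$. Finally, if a chord $c\neq c^{*}$ with arrowhead on $U$ colors a strand $s$ of $U$, with $s'$ the other strand at its arrowhead and $t$ its arrowtail strand, then at that move $s'$ and $t$ are colored but $s$ is not, so $h(t)>h(s)=\min\{h(s),h(s')\}$ and $c$ violates the inequality; whereas $c^{*}=c_p$ satisfies $h(a_r)\le h(a_q)=\min\{h(a_p),h(a_q)\}$. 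Hence $c_p$ is the unique chord with an arrowhead on $U$ satisfying the inequality, which is the second half of~(2).

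The delicate point is the passage from ``connected'' to ``equal to $U\setminus\{a_q\}$'': one must be sure the color-$j$ arc at $\delta_{q-1}$ reaches both endpoints of the path $U\setminus\{a_q\}$, and this is exactly where the not-cut-split hypothesis does its work — in the degenerate situation $a_l=a_p$ the component $U$ has only two strands and the claim is immediate, so no separate case is actually needed, but this should be remarked on. The other place requiring care is keeping the strict-versus-weak height comparisons straight in the counting step, which is what makes ``$c_p$ is the omitted chord $c^{*}$'' and ``only $c^{*}$ satisfies the inequality'' come out cleanly.
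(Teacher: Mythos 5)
Your proof is correct, and while the first half runs along the same lines as the paper's, the uniqueness argument is genuinely different. For showing that the color-$j$ class is exactly the component $U$, the paper splits into the cases $a_l=a_p$ (so $U$ has only two arrowheads) and $a_l\neq a_p$, whereas you handle both uniformly by observing that a connected color class omitting $a_q$ but containing both $a_p$ and $a_l$ must be all of $U\setminus\{a_q\}$; this is the same connectivity input, just packaged more cleanly, and your remark that the degenerate case needs no separate treatment is accurate. For uniqueness, the paper argues by contradiction: a second chord $c_p'$ with the property forces $\delta_q=\delta_{q'}$ and $a_q=a_q'$, and then the coloring move that colors $a_q$ must pass through $c_p'$, making $a_r'$ colored at stage $\delta_q$ while the height hypothesis says it is not. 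Your pigeonhole argument --- $m$ chords with arrowhead on $U$, $m-1$ non-seed strands each consuming a distinct such chord, the single unused chord identified as $c_p$, and every used chord shown to violate the inequality because its arrowtail strand is colored strictly before the strand it colors --- avoids having to match up the stages $\delta_q$ and $\delta_{q'}$ and is arguably more transparent; it also yields as a byproduct the exact bookkeeping of which chord colors which strand. Both arguments rest on the same two facts (each chord is used in at most one coloring move, and a used chord's arrowtail strand has strictly greater height than the strand it colors), so the difference is one of organization rather than of substance, but your version closes a small expository gap in the paper's phrase ``$a_r'$ is already colored by the definition of coloring move,'' which implicitly requires knowing that $a_q$ is colored through $c_p'$ rather than through some other adjacency.
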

\begin{proof}
Since $D$ is not cut-split, $a_p \neq a_{q}$. Suppose that $h(a_p) > h(a_q) \geq h(a_r)$. Let $\delta_{q-1}$ denote the stage right before $a_q$ receives a color. Since $a_r$ is not colored at the stage $\delta_{q-1}$, this implies that $a_{q}$ must receive its color from some strand $a_l$. Note that $a_q$ is adjacent to both $a_l$ and $a_p$.

If $a_p=a_l$, then the set of strands that are assigned the color $j$ form one circle component $U$ of $D$. Furthermore, there are exactly two arrowheads $c_p$ and $c_p'$ that touch $U$. By the definition of the coloring move, the strand $a_s$ that the arrowtail of $c_p'$ touches is already colored at the stage $\delta_{q-1}$. Therefore, we get the situation \textit{(2)} described in the statement of the lemma.

Suppose now that $a_p\neq a_l$. Then, there are more than two arrowheads touching $U$. As in the proof of Lemma 3.1, at the stage $\delta_{q-1}$, all strands of $U$ except $a_q$ are colored $j$ because otherwise, the strands that get assigned the color $j$ form a disconnected subset of $U$, which cannot happen. Thus, the stage $\delta_q$ is the first stage where every strand in $U$ gets assigned the color $j$. Now we show that $c_p$ is the unique chord incident to $U$ with the property that $h(a_r) \leq \min\{h(a_p),h(a_{q})\}$.

Suppose that there exists a chord $c_p'$, whose arrowhead separates $a_p'$ from $a_{q}'$, and the arrowtail of $c_p'$ lies on the strand $a_r'$. Suppose also that $a_p'$ and $a_q'$ get assigned the same color $j$ at the end of the coloring process, and $h(a_r') \leq \min\{h(a_p'),h(a_{q}')\}$. We can apply the argument in the previous paragraph and see that $a_{q}'$ must receive its color from some other strand $a_l'$, and that $\delta_{q'}$ is the first stage where every strand in $U$ gets assigned the color $j$. This implies that $\delta_q = \delta_{q'}$ and $a_q=a_{q}'$. Suppose $c_p \neq c_p'$. Now, $a_q$ is the arc that connects the arrowhead of $c_p$ and the arrowhead of $c_p'$. Then, at the stage $\delta_q$, $a_r'$ is already colored by the definition of coloring move and $a_r$ is uncolored by assumption. But on the other hand, since $h(a_r') \leq \min\{h(a_p'),h(a_{q}')\}$, $a_r'$ is not yet colored at stage $\delta_q$. This is a contradiction. Thus, $c_p = c_p'$.
\end{proof} 

Observe that if $D$ is a $k$-meridionally colorable Gauss diagram of a virtual link $L$ that is not cut-split, then $h$ attains a unique local maximum along each color at the seed strand. This fact was proved rigorously in \cite{blair2017wirtinger} for classical links, and it is not difficult to see that this fact generalizes to virtual links.

\begin{thm}
Let $L$ be a virtual link. The Wirtinger number and the virtual bridge number of $L$ are equal.

\end{thm}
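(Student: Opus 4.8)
The plan is to prove the two inequalities $\omega(L) \leq \vb(L)$ and $\vb(L) \leq \omega(L)$ separately, working throughout with Gauss diagrams since both invariants are defined as minima over all Gauss diagram representatives. For the inequality $\omega(L) \leq \vb(L)$, I would start with a Gauss diagram $D$ realizing the virtual bridge number, so $D$ has exactly $\vb(L)$ overbridges. The idea is that the overbridges give a natural seed set: color each overbridge (more precisely, pick one strand on each overbridge) with a distinct color. I then need to check that this partial coloring propagates to all strands via coloring moves. Since every strand is either an overbridge or a strand lying ``between'' overbridges following the arc structure along each circle, one can color strands by traveling along each circle component; each time we pass an arrowhead whose arrowtail sits on an already-colored overbridge, a coloring move extends the color across. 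A short argument (essentially: each non-overbridge strand is underneath and reachable by sliding from a neighboring colored strand because its bounding arrowhead has a tail on a colored overbridge) shows $D$ is $\vb(L)$-meridionally colorable, hence $\omega(D) \leq \vb(L)$ and so $\omega(L) \leq \vb(L)$.

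For the reverse inequality $\vb(L) \leq \omega(L)$, I would take a Gauss diagram $D$ with $\omega(D) = \omega(L) =: k$, together with a $k$-meridional coloring, its coloring sequence $\{\alpha_j\}$ and associated height function $h$. The goal is to produce (after possibly modifying $D$ by Reidemeister/stabilization-type moves, or directly) a Gauss diagram with at most $k$ overbridges. The strategy, following \cite{blair2017wirtinger}, is to use $h$ to reorganize the diagram: at each strand that is a local maximum of $h$ we have a ``top'' strand, and the key structural input is the remark just before the theorem statement — when $D$ is not cut-split, $h$ attains a unique local maximum along each color, namely at the seed strand. This means the $k$ colors partition the strands into $k$ ``descending'' regions, and within each color class the non-maximal strands can be pushed underneath. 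Translating this into overbridge count: one argues that the diagram can be arranged so that each color class contributes a single overbridge, giving $\vb \leq k$. I also need to handle the cut-split case separately — if $D$ is cut-split, either it has a chordless circle (handled by the Remark, adding a trivial overbridge) or two adjacent equal strands at an arrowhead, in which case a Reidemeister move simplifies $D$ without increasing $\omega$, so one can induct or reduce to the non-cut-split case.

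The main obstacle I expect is the $\vb(L) \leq \omega(L)$ direction, specifically turning the combinatorial/height-function data of a meridional coloring into an actual Gauss diagram with few overbridges. In the classical case of \cite{blair2017wirtinger} this is done geometrically by an isotopy of the link that pushes strands up or down according to $h$; here I need the Gauss-diagram analog, which amounts to: given the coloring sequence, perform a sequence of moves (conjugating relations, reversing chords, Reidemeister II/III analogs on the Gauss diagram) so that every arrowtail ends up on one of $k$ designated strands — one per color. The delicate point is ensuring these moves are legitimate virtual link equivalences (they correspond to slides in the thickened surface) and that they genuinely reduce the overbridge count to $k$ rather than merely $k$ plus lower-order error. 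Lemma 3.2 is the crucial technical tool: it controls precisely when an arrowhead forces a new color versus when it is ``redundant'' (situation (1)), and situation (2) identifies exactly the one chord per color class whose tail lies strictly below both sides of its head — these are the chords that will become the ``re-attachment'' chords organizing each overbridge. I would lean on Lemma 3.2 to show that outside of these $k$ special chords, every arrowtail can be slid onto the corresponding seed strand, which is what collapses the overbridge count to $k$.

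Finally, I would assemble the two inequalities to conclude $\omega(L) = \vb(L)$, noting that the cut-split reductions and the Remark about chordless circles make the argument uniform across all virtual links, including unlinks where both quantities equal the number of components.
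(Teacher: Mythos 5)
Your first inequality $\omega(L)\leq\vb(L)$ matches the paper: color the overbridges of a bridge-minimizing Gauss diagram and propagate along each circle, using at each arrowhead the fact that the corresponding arrowtail lies on an already-colored overbridge. The problem is the reverse direction, where your proposal has a genuine gap. You correctly identify the structural inputs (Lemmas 3.1 and 3.2, the unique local maximum of $h$ per color, the cut-split reduction), but the mechanism you propose --- a sequence of Gauss-diagram moves (``conjugating relations, reversing chords, Reidemeister II/III analogs'') that slides every arrowtail onto one of $k$ designated strands --- is never specified, and you yourself flag it as the delicate point without resolving it. Asserting that ``outside of these $k$ special chords, every arrowtail can be slid onto the corresponding seed strand'' is essentially a restatement of the conclusion $\vb\leq k$; to make it a proof you would need to exhibit the move sequence, verify each move is a legitimate equivalence, and show the process terminates with exactly $k$ overbridge-carrying strands. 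The paper avoids this entirely by going geometric: it desingularizes the virtual crossings to get a diagram $\widehat{D}$ on a surface $\Sigma$, embeds a copy of each strand $a_i$ at level $\Sigma_{h(a_i)}$ in $\Sigma\times I$, and joins adjacent strands by arcs near each crossing. Lemma 3.1 (for knots) and Lemma 3.2 (for non-cut-split links) guarantee that the over-strand lies above $\min\{h(a_i),h(a_j)\}$ at every crossing except at most one per color class, and at that exceptional crossing the connecting arc is built with a single local minimum. The result is a Morse position with exactly $k$ maxima, whose projection back to $\Sigma_0$ is a diagram with $k$ overbridges. Your proposal needs either this construction or a genuinely worked-out combinatorial substitute.

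A secondary issue: in the cut-split case you say a Reidemeister move ``simplifies $D$ without increasing $\omega$.'' If $a_p=a_q$ are adjacent at an arrowhead, the circle component $U$ containing them carries exactly one arrowhead, but the arrowtail of that chord need not lie on $U$, so no Reidemeister I move is available in general. The paper instead observes that $U$ contributes exactly one overbridge and cannot be reached by a coloring move, so $\omega(D\setminus U)=\omega(D)-1$ and $\vb(D\setminus U)=\vb(D)-1$, and then applies the induction hypothesis on the number of components. Your ``one can induct'' is in the right spirit, but the reduction step as stated is incorrect.
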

\begin{proof}
Suppose that $L$ is an $N$-component virtual link with $\vb(L)=k$. Let $D$ be a Gauss diagram such that $\vb(D) = \vb(L)$. We assign $k$ distinct colors to the overbridges. We pick a point on one of the overbridges, say $b_1$, and travel along the circle until we encounter an arrowhead of the chord $c_1$. If the strand $b_2$ adjacent to $b_1$ is already colored, we do nothing. If $b_2$ is not yet colored, we can use the coloring move to extend the color from $b_1$ to $b_2$ since the arrowtail of $c_1$ is on some overbridge which has already received a color. Then, we start at a point on $b_2$, and follow the same procedure to make sure that the strand $b_3$ adjacent to $b_2$ receives a color. Continuing in this manner, we can color the whole circle component containing $b_1$. We can apply this procedure to every component of $D$ so that every strand of $D$ receives a color. This shows that the overbridges are the seed strands. Therefore, we have that $\omega(L) \leq \vb(L)$ as desired.

We establish the other inequality by induction on $N$.
First, we consider the case where $N=1.$ Suppose that a virtual knot $L$ admits a Gauss diagram $D$ with $c(D)$ chords, which is $k$-meridionally colorable. We can obtain a knot diagram $\widehat{D}$ on an oriented surface $\Sigma$ of genus $g$ from $D$. Let $I = [-1,1] \subset \mathbb{R}$. Let $f:\Sigma\times I \ra \mathbb{R}$ be the standard Morse function. We will construct a smooth embedding of $L$ in $\Sigma \times I$ with exactly $k$ maxima. To that end, for $t\in \mathbb{R}$, let $\Sigma_t = \Sigma \times \lbrace t \rbrace$ and $\Sigma_{[s,t]} = \Sigma \times [s,t]$. First, we embed the diagram $\widehat{D}$ in the level surface $\Sigma_0$. Next, we embed a copy $\widetilde{a_i}$ of each strand $a_i$ of $\widehat{D}$ in the level $\Sigma_{h(a_i)}$ in such a way that the orthogonal projection to $\Sigma_0$ maps $\widetilde{a_i}$ to $a_i$. At the moment, we have an embedding of a collection of disconnected line segments in $\Sigma \times I$. To obtain the knot $K$, we will construct arcs $a_{ij}$ connecting $\widetilde{a_i}$ to $\widetilde{a_j}$ for adjacent strands $a_i$ and $a_j$ in $\widehat{D}$.

Since $a_i$ is adjacent to $a_j$, they are the under-strands of some crossing $c_{ij}$ in $\widehat{D}$ with the over-strand $a_k$. For a small enough $\epsilon > 0$, let $B_{\epsilon}(c_{ij})$ be an open disk centered at $c_{ij}$ with $B_{\epsilon}(c_{ij}) \cap \widehat{D} = \lbrace a_i,a_j, a_k \rbrace.$ Consider the cylinder $B_{\epsilon}(c_{ij}) \times [0,1]$ transverse to the level surfaces of $\Sigma \times I$. It follows that $(B_{\epsilon}(c_{ij}) \times [0,1]) \cap \{\widetilde{a_1},...,\widetilde{a_{c(D)}} \} = \{\widetilde{a_i},\widetilde{a_j},\widetilde{a_k}\}$ for a small enough $\epsilon > 0$. To construct the arc $a_{ij}$, we need to consider two cases:\\

\textbf{Case I:} $L$ is a virtual knot with $\omega(L) > 1$.\\

\textbf{Case II:} $L$ is a virtual knot with $\omega(L) = 1$.\\

\textbf{Case I:} There are two subcases:\\

Subcase I: Suppose that $a_i$ and $a_j$ get assigned the same color $\mu$. Since $\omega(D) \neq 1$, Lemma 3.1 implies that $h(a_k) > \min\lbrace h(a_i),h(a_j)\rbrace$. Then $a_{ij}$ can be chosen so that the orthogonal projection of $\widetilde{a_k} \cup (\widetilde{a_i} \cup a_{ij} \cup \widetilde{a_j})$ to the level $\Sigma_0$ is the subset of $D$. (see Figure 6).

Subcase II: Suppose that $a_i$ and $a_j$ get assigned distinct colors. Let $x_{ij}$ be a point in $(B_{\epsilon}(c_{ij}) \times [0,1]) \cap \Sigma_{1/(c(D)+2)}$ so that when we orthogonally project $x_{ij}$ to the plane $\Sigma_0$, $x_{ij}$ gets mapped to the crossing $c_{ij}$. Then, we construct $a_{ij} \subset B_{\epsilon}(c_{ij}) \times [0,1]$ as the union of a smooth arc connecting $\widetilde{a_i}$ to $x_{ij}$ and another smooth arc connecting $x_{ij}$ to $\widetilde{a_j}$. As $h(a_k) \geq 1/(c(D)+1) > 1/(c(D)+2)$, $a_{ij}$ can be chosen so that the orthogonal projection of $\widetilde{a_k} \cup (\widetilde{a_i} \cup a_{ij} \cup \widetilde{a_j})$ to the level $\Sigma_0$ is the subset of $\widehat{D}$ (see Figure 6).\\

\begin{figure}
    \centering
    \begin{subfigure}[b]{0.475\textwidth}
        \includegraphics[width=\textwidth]{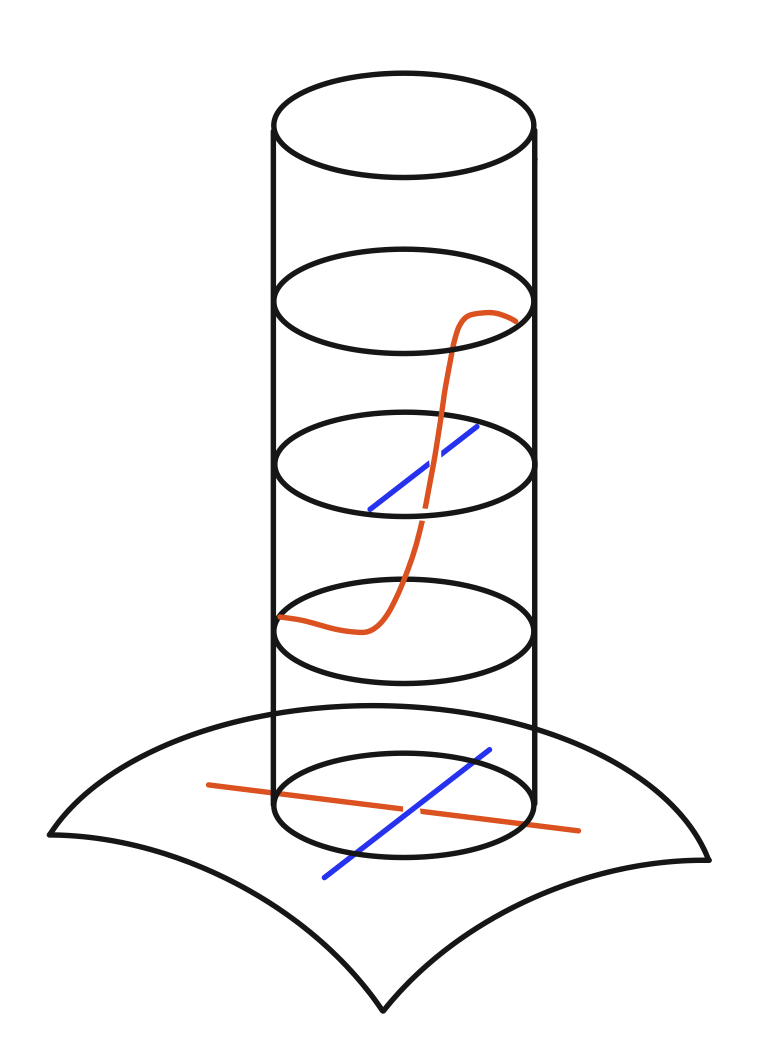}
        
    \end{subfigure}
    \begin{subfigure}[b]{0.5\textwidth}
        \includegraphics[width=\textwidth]{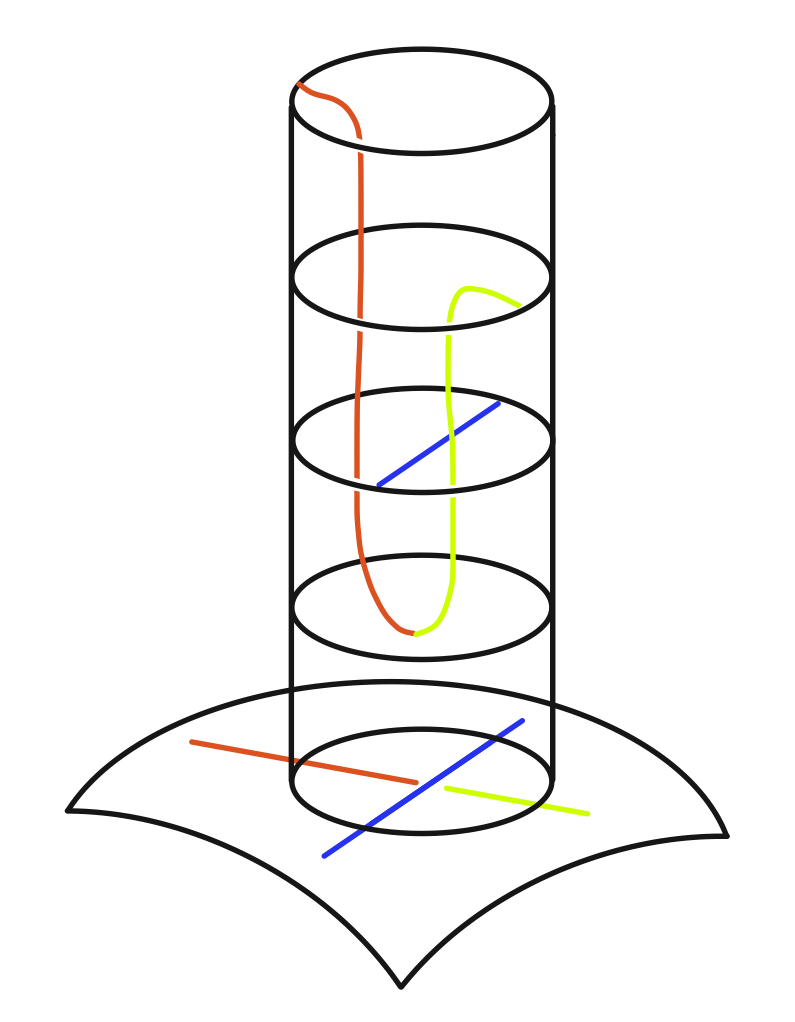}
     
    \end{subfigure}
    \caption{At left, the construction of $a_{ij}$ in Subcase I. At right, The construction of $a_{ij}$ in Subcase II.}
\end{figure}

Once we construct $a_{ij}$ for each crossing, we obtain an embedding $\widetilde{L}$ of $L$ in $\Sigma \times I$. To see that $\widetilde{L}$ has exactly $k$ local maxima, we perturb the knot slightly. Suppose that $a_i$ is adjacent to $a_{i-1}$ and $a_{i+1}$. Let $\widetilde{c_{ij}}$ be the point in $a_{ij}$ that orthogonally projects to $c_{ij}$ in $\Sigma_0$. If $a_i$ is a seed strand, we perturb the subarc $a_{(i-1)i} \cup \widetilde{a_i} \cup a_{i(i+1)}$ from $\widetilde{c_{(i-1)i}}$ to $\widetilde{c_{i(i+1)}}$ to obtain a smooth arc that monotonically increases to the midpoint of $\widetilde{a_i}$ and monotonically decreases from there. 

On the other hand, if $a_i$ is not a seed strand, then either $a_i$ has the same color as $a_{i-1}$ and $h(a_i) < h(a_{i-1})$, or it has the same color as $a_{i+1}$ and $h(a_i) < h(a_{i+1})$. As $\omega(D) > 1$, it follows from the connectedness of the set of strands having the same color that if $a_{i-1}$ and $a_{i+1}$ have the same color, then $h(a_{i+1}) < h(a_i) < h(a_{i-1})$ or vice versa. This allows us to isotope the knot in the following way regardless of the ways $a_{i+1}$ and $a_{i-1}$ are colored: we perturb the subarc $a_{(i-1)i} \cup \widetilde{a_i} \cup a_{i(i+1)}$ from $\widetilde{c_{(i-1)i}}$ to $\widetilde{c_{i(i+1)}}$ to obtain a smooth arc that is strictly increasing if $h(a_{i-1}) < h(a_{i+1})$ or strictly decreasing if $h(a_{i-1}) > h(a_{i+1})$. \\

\textbf{Case II:} Since $\omega(D) = 1$, we may not have the property that $h(a_k) > \min\lbrace h(a_i),h(a_j)\rbrace$ for all crossings on the knot diagram. But by Lemma 3.1, there is only one crossing $c_{ij}$ on the knot diagram with the property that $h(a_k) \leq \min\lbrace h(a_i),h(a_j)\rbrace$. This means that at every crossing except $c_{ij}$, we can construct $a_{ij}$ in the same way as in Subcase I of Case I. Now, let $x_{ij}$ be a point in $(B_{\epsilon}(c_{ij}) \times [0,1]) \cap \Sigma_{1/(c(D)+2)}$ so that when we orthogonally project $x_{ij}$ to the plane $\Sigma_0$, $x_{ij}$ gets mapped to the crossing $c_{ij}$. We construct $a_{ij}$ as the union of two smooth, monotonic arcs, connecting $x_{ij}$ to endpoints of $\widetilde{a_i}$ and $\widetilde{a_j}$. These two monotonic arcs can be chosen so that the orthogonal projection of $\widetilde{a_k} \cup (\widetilde{a_i} \cup a_{ij} \cup \widetilde{a_j})$ to the level $\Sigma_0$ is a subset of $\widehat{D}$. Then, the arc $a_{ij}$ contains the unique local minimum of the constructed embedding, and we obtain an embedding $\widetilde{L}$ with one maxima.\\

It follows that $\widetilde{L}$ has a projection onto $\Sigma_0$ with $k$ overbridges. Therefore, the Gauss diagram corresponding to the projection has $k$ overbridges, and $\vb(D) = k$.\\

Suppose now that $N > 1$. Suppose that $\omega(L') = \vb(L')$ for all links $L'$ of fewer than $N$ components. Let $D$ be a Wirtinger number minimizing Gauss diagram for $L$. We consider two cases:\\

\textbf{Case A:} $D$ is not cut-split.\\

\textbf{Case B:} $D$ is cut-split.\\

\textbf{Case A:} We will construct a smooth embedding of $L$ in $\Sigma \times I$ with exactly $k$ maxima. There are three subcases:\\

Subcase I: If $a_i$ and $a_j$ get assigned the same color, and $h(a_k) > \min \lbrace h(a_i),h(a_j) \rbrace$, then we follow the construction of $a_{ij}$ in Subcase I of Case I in the virtual knot case.\\

Subcase II: If $a_i$ and $a_j$ get assigned distinct colors, then we follow the construction of $a_{ij}$ in Subcase II of Case I in the virtual knot case.\\

Subcase III: If $a_i$ and $a_j$ get assigned the same color, say $\mu$, and $h(a_k) \leq \min \lbrace h(a_i),h(a_j) \rbrace$, then by Lemma 3.2, the strands that get assigned the color $\mu$ form a circle component of $D$, and $c_{ij}$ is the unique crossing with the property $h(a_k) \leq \min \lbrace h(a_i),h(a_j) \rbrace$. We construct $a_{ij}$ as in Subcase II of Case I of the knot case. Namely, we let $x_{ij}$ be a point in $(B_{\epsilon}(c_{ij}) \times [0,1]) \cap \Sigma_{1/(c(D)+2)}$ so that when we orthogonally project $x_{ij}$ to the plane $\Sigma_0$, $x_{ij}$ gets mapped to the crossing $c_{ij}$. We construct $a_{ij}$ as the union of two smooth, monotonic arcs, connecting $x_{ij}$ to endpoints of $\widetilde{a_i}$ and $\widetilde{a_j}$. These two monotonic arcs can be chosen so that the orthogonal projection of $\widetilde{a_k} \cup (\widetilde{a_i} \cup a_{ij} \cup \widetilde{a_j})$ to the level $\Sigma_0$ is a subset of $\widehat{D}$. Then, the arc $a_{ij}$ contains the unique local minimum in the color $\mu$ of the constructed embedding. \\

After we perform the construction above at every crossing, we obtain a smooth embedding of $L$ in $\Sigma \times I$. Furthermore, the standard height function $f:\Sigma \times I \ra \mathbb{R}$ restricts to a Morse function on $L$ with exactly $k$ minima and $k$ minima. This implies that $\widetilde{L}$ has a projection onto $\Sigma_0$ with $k$ overbridges. Therefore, the Gauss diagram corresponding to the projection has $k$ overbridges, and $\vb(D) = k$.\\

\textbf{Case B:} Let $D$ be the Wirtinger number minimizing Gauss digram for $L$ that is cut-split. That is, there exist two strands $a_p$, and $a_q$ that are adjacent at some arrowhead of $D$ such that $a_p = a_q$ or if $D$ contains a circle with no chords. Let $U$ be a component of $D$ that contains $a_p=a_q$. Observe that $\omega(D\backslash U) = \omega(D) - 1$ because $U$ cannot arise as a result of a coloring move. Also, $\vb(D\backslash U) = \vb(D) - 1$ because $U$ has one overbridge. Now, by the induction hypothesis, $\omega(D) - 1 =\omega(D \backslash U) = \vb(D \backslash U) = \vb(D)-1$ Thus, $\omega(D) = \vb(D)$. Since $D$ is Wirtinger minimizing, $\omega(L) = \vb(L).$ This completes the proof of Theorem 3.3.\\

\end{proof}

\section{Applications}

In this section, we present some applications of Theorem 3.3
\subsection{Computations of the Virtual Bridge Numbers}

\begin{exmp}
We will demonstrate the procedure in the proof of Theorem 3.3 with a specific example. For integers $a$ and $b$ with $a \geq 1$, $b\geq 2$, and $b$ even, Satoh and Tomiyama gave an example of a family of virtual knots $K_2(a,b)$ (see Figure 7) whose real crossing numbers equal to $a+b$ \cite{MR2833547}.

\begin{figure}[!ht]
  \includegraphics[width=1\textwidth]{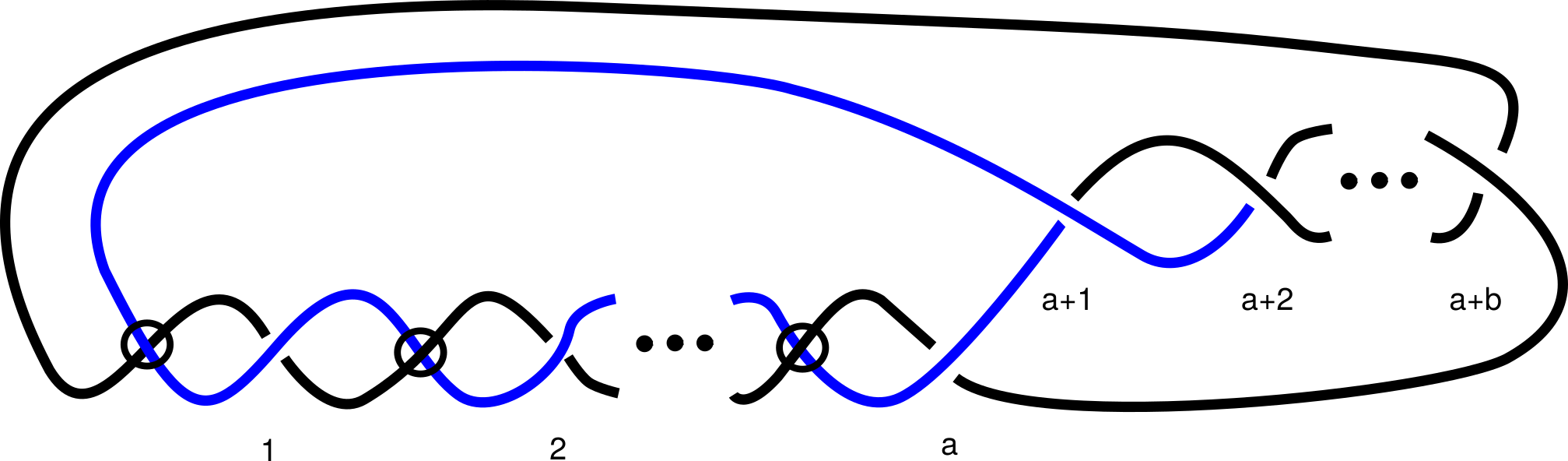}
  \caption{Satoh and Tomiyama's example}
\end{figure}

\begin{figure}
    \centering
    \begin{subfigure}[b]{0.49\textwidth}
        \includegraphics[width=\textwidth]{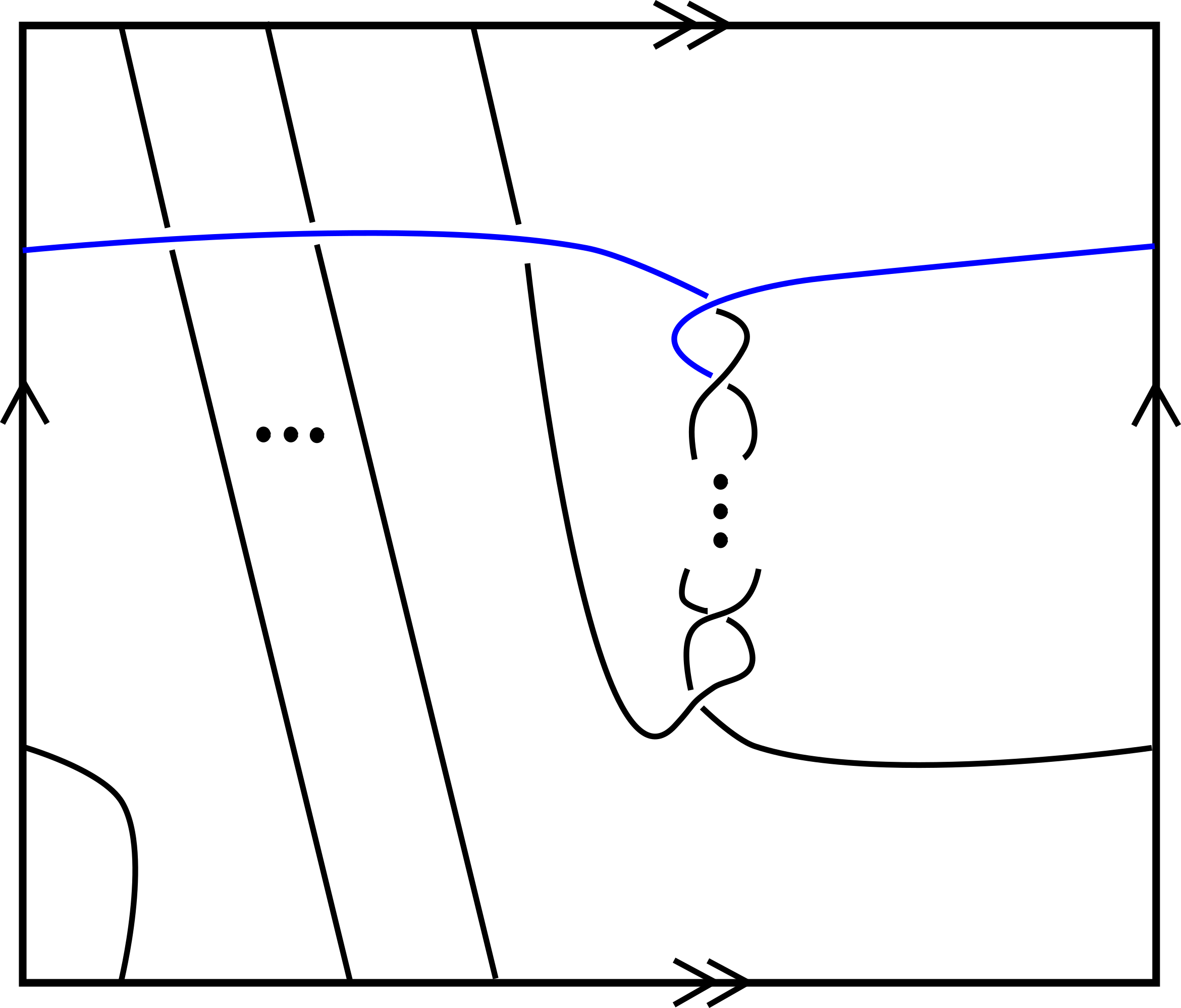}
        
    \end{subfigure}
    \begin{subfigure}[b]{0.49\textwidth}
        \includegraphics[width=\textwidth]{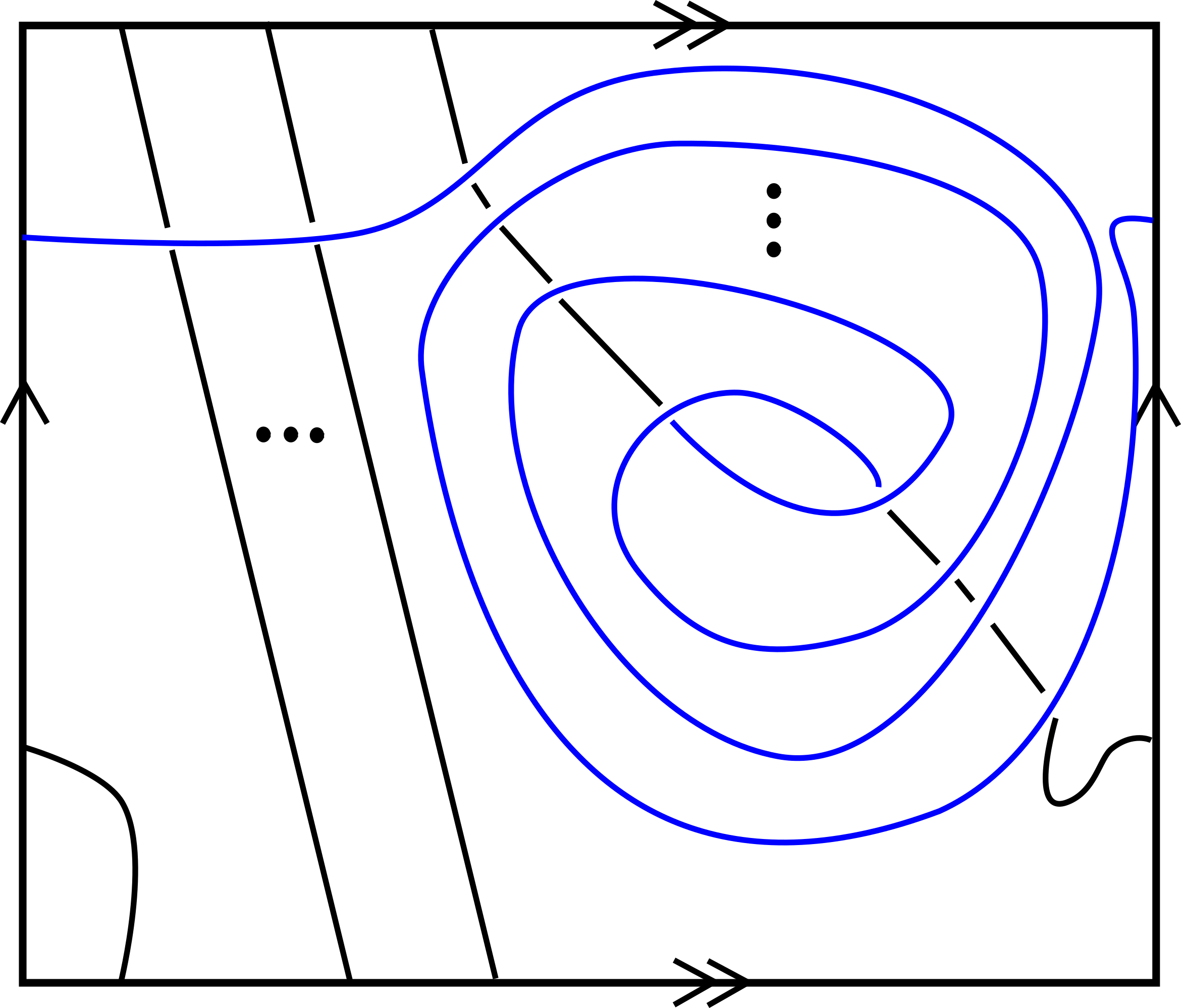}
     
    \end{subfigure}
    \caption{}
\end{figure}

Observe that the diagram in Figure 7 has $b$ overbridges. But since the blue strand is a seed strand, we can obtain another diagram of $K_2(a,b)$ with a unique overbridge as follows. We start by $K_2(a,b)$ as a knot diagram $D_2(a,b)$ on a torus $T$. This is demonstrated on the left of Figure 8. Thinking of $K_2(a,b)$ as a knot in $T \times [-1,1]$, we can start by embedding a copy of the blue seed strand in $T \times \{1/2\}$. We then embed copies of the remaining strands of $D_2(a,b)$ on different level surfaces dictated by the coloring move. At the end of the embedding process, we have copies of disconnected arcs, whose orthogonal projections to $T \times \{ 0 \}$ is $D_2(a,b)$. During the coloring process, there is no crossing where locally the overstrand gets assigned a color last. Therefore, we can connect the disjoint copies of strands lying above $T \times \{ 0 \}$ to form a knot in such a way that no addition critical points with respect to the standard Morse function $f:T \times [-1,1] \ra \mathbb{R}$ restricted to $K_2(a,b)$ are created. After a slight perturbation, we see that $f_{K_2(a,b)}$ has a unique maximum. A shadow of the bridge disk corresponding to such a maximum is drawn on the right of Figure 8 in blue.

\end{exmp}

In \cite{MR3334661}, Boden and Gaudreau showed how to use other virtual invariants to compute lower bounds for the bridge numbers. We summarize some of their results here. Suppose that the knot group $G_K$ of a virtual knot $K$ has a presentation with $n$ generators. Then, one can form the Alexander matrix $A$ associated to $G_K$, whose $(i,j)$ entry is the Laurent polynomial obtained from taking the Fox derivative of the $i$-th relation arising in the presentation of $G_K$ with respect to the $j$-th generator and substituting $t$ for each generator. The $k$-th elementary ideal $E_k$ of $G_k$ is the ideal generated by all the $(n-k) \times (n-k)$ minors of $A$. It follows that if $K$ has meridional rank $k$, then, $E_k = (1)$. Using this fact, we can bound the meridional rank and hence, the bridge number of $K$ from below.

\begin{prop}[\cite{MR3334661}]
If $K$ is a virtual knot whose $k$-th elementary ideal is proper and nontrivial, then the knot group $G_K$ has meridional rank at least $k+1$ and $K$ has bridge number at least $k+1$.
\end{prop}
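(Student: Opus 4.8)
\emph{Proof idea.} The plan is to recall Boden and Gaudreau's argument, which rests on two inputs: that $\vb(K)$ bounds the meridional rank $\operatorname{mr}(G_K)$ below, and that the elementary ideals $E_k(K)$ are invariants of the Alexander module of $K$, hence computable from \emph{any} presentation of $G_K$. For the first input I would invoke Theorem 3.3 together with the observation recorded in Section 2 that the seed strands of an $\omega$-realizing Gauss diagram are meridians generating $G_K$; this yields $\vb(K)=\omega(K)\ge\operatorname{mr}(G_K)$, and $\operatorname{mr}(G_K)$ is in turn at least the minimal number of generators of $G_K$. So it suffices to show that $G_K$ cannot be generated by $k$ elements, and both stated conclusions then follow at once.

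Assume for contradiction that $G_K$ has a finite presentation $\langle g_1,\dots,g_k\mid r_1,\dots,r_s\rangle$ (one exists: start from the Wirtinger presentation read off a Gauss diagram and apply Tietze transformations to reach the given generating set). Let $\Lambda=\mathbb{Z}[t^{\pm1}]$, and let $A'$ be the corresponding Alexander matrix over $\Lambda$, an $s\times k$ matrix; it presents the Alexander module $\mathcal A_K$ (the $\Lambda$-module with presentation matrix the Alexander matrix of $G_K$) on $k$ generators. Here I use the classical fact that the elementary, i.e.\ Fitting, ideals of a finitely presented module over a commutative ring are independent of the chosen presentation: $A'$ and the Alexander matrix $A$ coming from the original Gauss diagram present the same module $\mathcal A_K$, so they determine the same ideals, and in particular the ideal generated by the (empty) $0\times0$ minors of $A'$, which is all of $\Lambda$, equals $E_k(K)$. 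Thus $E_k(K)=(1)$, contradicting the hypothesis that $E_k(K)$ is proper. Hence $G_K$ is not $k$-generated, so $\operatorname{mr}(G_K)\ge k+1$ and, by the previous paragraph, $\vb(K)\ge k+1$.

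The delicate point is the invariance claim: one must verify that the abelianized Fox Jacobian (with every generator sent to $t$) of an \emph{arbitrary} finite presentation of $G_K$ really presents $\mathcal A_K$, so that Fitting-ideal invariance applies --- equivalently, one may track directly how the Fox Jacobian transforms under the four Tietze moves and check that the ideals $E_j$ are preserved with the indexing ``$E_j$ is generated by the $(n-j)\times(n-j)$ minors of an $n$-column Alexander matrix''. Two features of the virtual setting make this worth stating carefully: virtual knot groups need not admit deficiency-one presentations, so $A'$ need not be square and one must use the general Fitting-ideal formalism (minors of all sizes, with the empty-minor convention) rather than a single determinant; and one must keep the indexing convention fixed across presentations, so that ``$E_k(K)$ proper'' translates precisely into ``$\mathcal A_K$, and therefore $G_K$, cannot be generated by $k$ elements''. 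With those conventions in force, only the properness of $E_k(K)$ is used in the argument --- its nontriviality is not logically required, but it delimits the regime in which $E_k(K)$ furnishes an effective, computable lower bound.
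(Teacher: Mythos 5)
The paper does not actually prove this proposition: it is quoted from Boden--Gaudreau, and the only justification offered is the remark immediately preceding it, that a virtual knot of meridional rank $k$ has $E_k=(1)$. Your write-up is a correct expansion of exactly that remark --- the chain $\vb(K)=\omega(K)\ge{}$meridional rank${}\ge{}$rank via Theorem 3.3 and the observation closing Section 2, followed by presentation-independence of the Fitting ideals to rule out a $k$-generator presentation --- so it takes the same route, just with the details filled in. One small imprecision: for an arbitrary (non-meridional) generating set the Alexander matrix is obtained by sending each generator $g_j$ to $t^{a_j}$, where $a_j$ is its image under abelianization, not to $t$; this is harmless here, since you may either substitute the correct powers or simply restrict to meridional generating sets (whose elements all abelianize to $t$), which is all the meridional-rank conclusion requires. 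Your observation that only the properness of $E_k$, not its nontriviality, is logically used is also correct.
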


Another lower bound for vb$(K)$ considered by Boden and Gaudreau comes from the Gaussian parity, which is defined in terms of Gauss diagrams as follows. Let $\mathcal{C}(D)$ denote the set of chords in a Gauss diagram $D$ and take $c \in \mathcal{C}(D)$. Then, the \textit{Gaussian parity} is a function $f:\mathcal{C}(D) \ra \mathbb{Z}_2$ where $f(c)$ is the number of elements in $\mathcal{C}(D)$ that intersects $c$ mod 2. Now, given a Gauss diagram $D$ we define its \textit{projection} $P_f(D)$ to be a Gauss diagram obtained from $D$ be erasing all chords $c$ in $\mathcal{C}(D)$ such that $f(c) = 1$. By considering the behavior of $f$ under Reidemeister moves, one can check that if $D_1$ and $D_2$ are equivalent Gauss diagrams, then $P_f(D_1)$ will be equivalent to $P_f(D_2).$ Since $P_f(D)$ is obtain from $D$ by erasing some chords, vb($K$) is bounded below by vb($P_f(K))$. We can now combine these techniques with Theorem 3.3 to determine bridge numbers of more virtual knots.

\begin{exmp}
Figure 9 shows a virtual knot $K$ together with its projection. In \cite{MR3334661}, the authors used $K$ as an example of a knot whose upper and lower quandles are trivial, but has vb$(K)> 1$. Combining with Theorem 3.3, we can conclude that vb$(K)$ = 2. More specifically, one can verify that the first elementary ideal of the knot group of $P_f(K)$ is $E_1 = (t+1,3)$. Hence, vb($K$) $\geq$ vb($P_f(K)) = 2$. On the other hand, the green strand and the blue strand in the Gauss diagram in Figure 9 on the left are seed strands. Therefore, we have that vb($K) \leq 2$. 
\end{exmp}

\begin{figure}
    \centering
    \begin{subfigure}[b]{0.35\textwidth}
        \includegraphics[width=\textwidth]{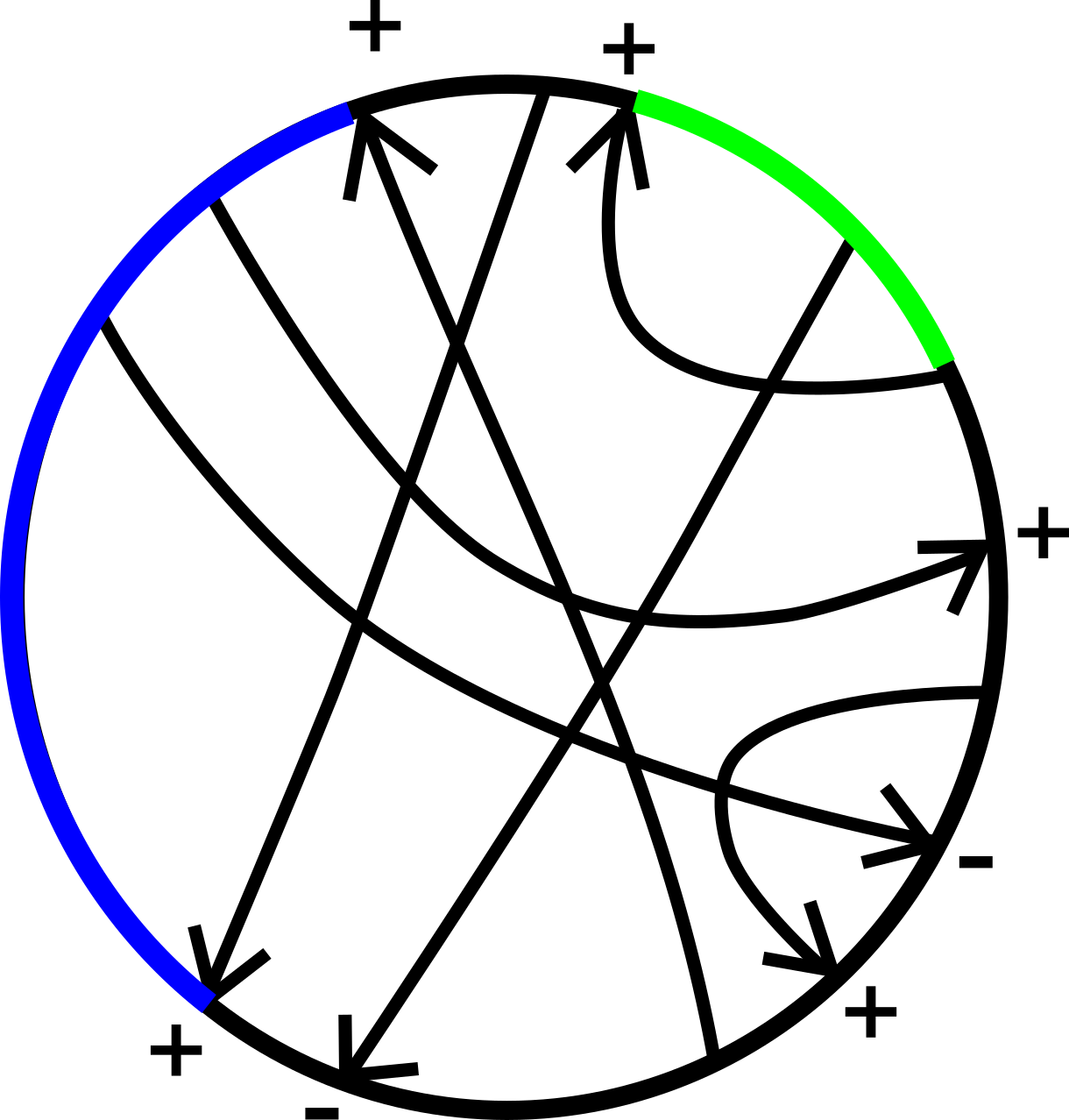}
         \end{subfigure}
    \begin{subfigure}[b]{0.34\textwidth}
        \includegraphics[width=\textwidth]{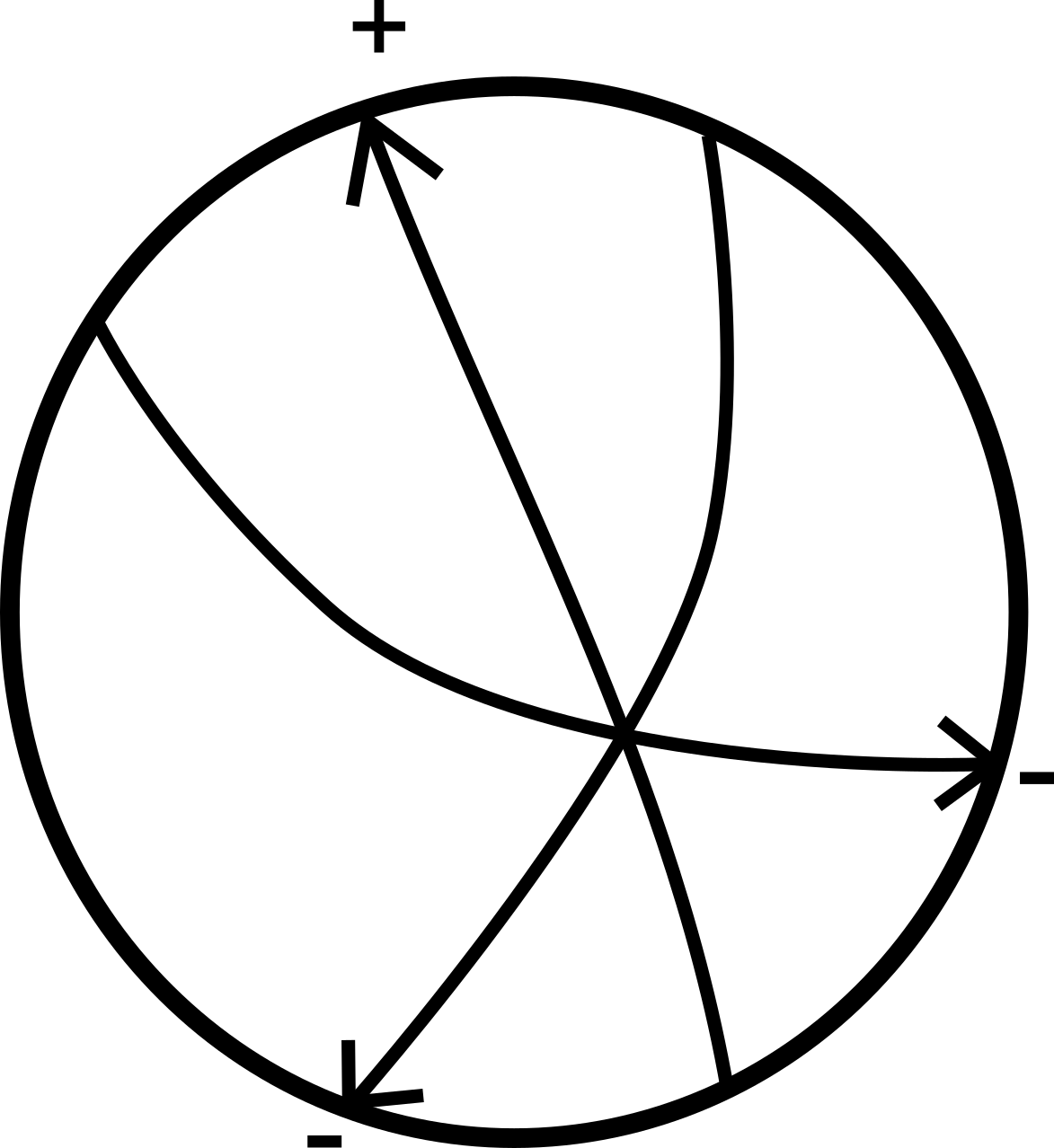}
     \end{subfigure}
    \caption{}
\end{figure}

\newpage

\begin{exmp}
The authors in \cite{blair2017wirtinger} wrote a program to calculate $\omega(D)$ for a Gauss diagram $D$ representing a classical knot. The original code is available at \cite{pv}. Starting with $k=2$, the program runs across all subsets of size $k$ of the set of strands $s(D)$ and determines whether $D$ is $k$-meridionally colorable. If not, the program repeats the process with all subsets of size $k+1$ of $s(D)$. The algorithm terminates once the first valid coloring is found. The program can be used to calculate $\omega(D)$ for a Gauss diagram $D$ representing a virtual knot if we modify the program to start at $k=1$. This allows us to compute $\omega(D)$ for all Gauss diagrams of virtual knots up to 6 crossings from Jeremy Green's virtual knot table. The spreadsheets containing the results are available at \cite{pp}. 

From this table of data, we can also get some information about the quandle counting invariants of the knots. More precisely, for a finite quandle $X$, a \textit{quandle coloring} of a knot diagram $D$ is an assignment of elements of $X$ to the strands of $D$ such that the quandle relation is satisfied at each crossing. The quandle counting invariant of a virtual knot $K$, denoted $Col_X(K)$, is the number of quandle colorings of $K$. If $D$ is $k$-meridionally colorable, then we know that $k$ strands generate the coloring of the whole diagram. So if $|X|$ denotes the order of the quandle, then there are $|X|$ possible choices of elements to assign to each seed strand. Thus, there are $|X|^k$ ways of coloring the whole diagram if we start with the seeds and generate the coloring by a sequence of coloring moves. But some of these colorings may not be quandle colorings. Since there are always $|X|$ trivial quandle colorings of a virtual knot, it follows that $|X| \leq Col_X(K) \leq |X|^k$. This implies that, for instance, virtual bridge number one knots only admit trivial quandle colorings.

\end{exmp}
\subsection{Weakly Superslice Links}

Let $D$ and $D'$ be virtual knot diagrams. We say that $D$ is \textit{welded equivalent} to $D'$ if one can be obtained from the other by a sequence of extended Reidemeister moves, planar isotopies, and welded moves (See Figure 10).

\begin{figure}[!ht]
  \centering
    \includegraphics[width=0.5\textwidth]{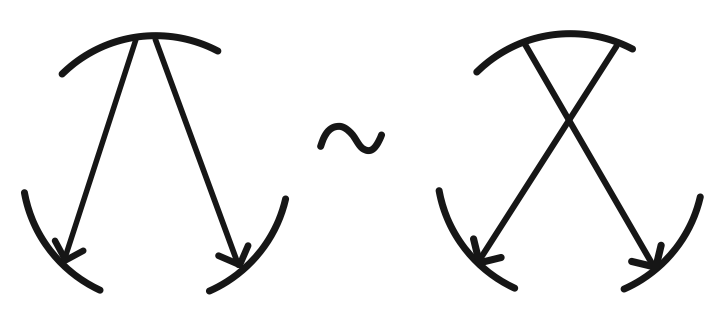}
        \caption{Welded move.}
\end{figure}

In \cite{MR1758871}, Satoh proved that any ribbon torus in $\mathbb{R}^4$ can be represented by a virtual knot diagram through the correspondence in Figure 11. We denote the ribbon torus presented by $K$ as $Tube(K)$. Furthermore, Satoh showed that if $K$ is welded equivalent to $K'$, then the corresponding ribbon tori are ambient isotopic. Using Satoh's correspondence, virtual bridge number one knots correspond to a particularly simple ribbon torus. 

\begin{figure}
    \centering
    \begin{subfigure}[b]{0.35\textwidth}
        \includegraphics[width=\textwidth]{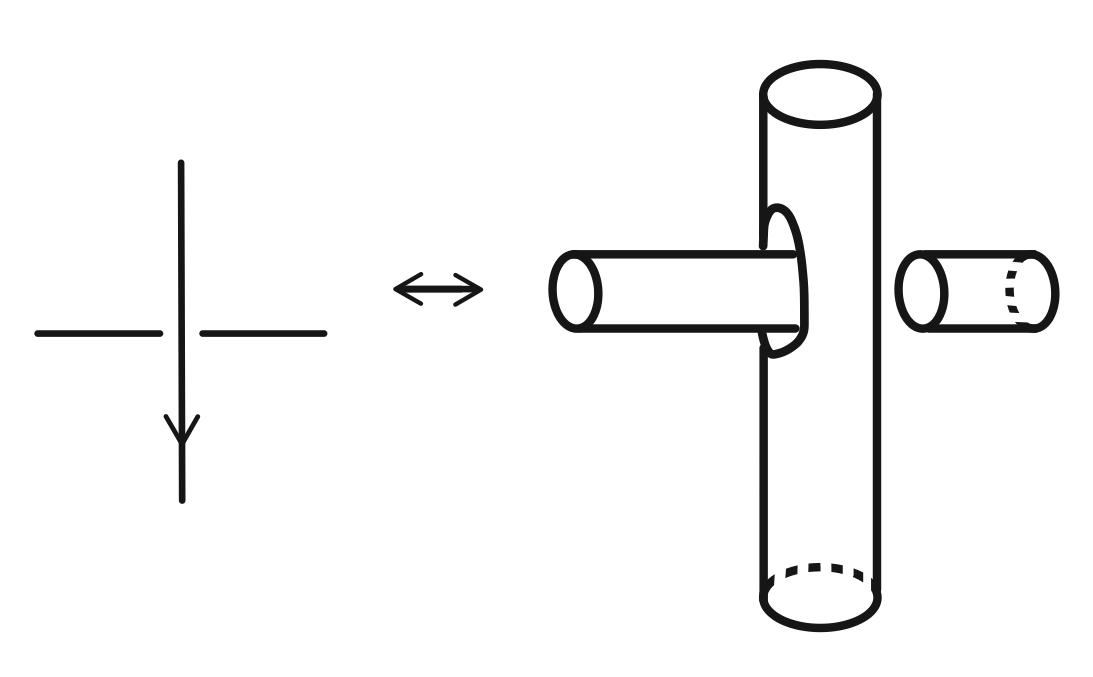}
         \end{subfigure}
    \begin{subfigure}[b]{0.34\textwidth}
        \includegraphics[width=\textwidth]{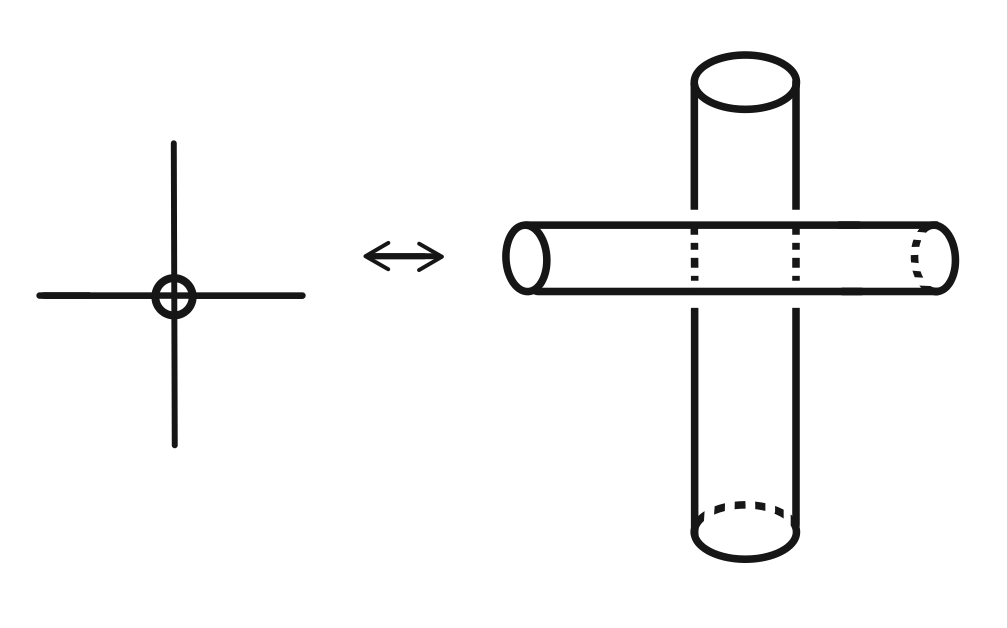}
     \end{subfigure}
    \caption{}
\end{figure}

\begin{prop}
Let $K$ be a virtual bridge number one knot. Then, Tube(K) is unknotted.
\end{prop}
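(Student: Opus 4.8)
The plan is to deduce from $\vb(K)=1$ that the virtual knot group $G_K$ is infinite cyclic, and then to combine Satoh's identification of the fundamental group of the complement of $\mathrm{Tube}(K)$ with $G_K$ and the unknotting criterion for ribbon tori. So the proof splits into a group-theoretic step and a $4$-dimensional topology step, with essentially all the substance in the second.

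For the first step: by definition $\vb(K)=1$ means $K$ is represented by a Gauss diagram $D$ with a single overbridge $b$ (equivalently, by Theorem 3.3, with a single seed strand, in which case the seed strand generates $G_K$ by the observation at the end of Section 2.4). Every chord of $D$ then has its arrowtail on $b$, so reading the Wirtinger relations $a_{i+1}=b^{\epsilon_i}a_ib^{-\epsilon_i}$ successively around the circle identifies each strand with $b$ in $G_K$; hence $G_K$ is generated by a single meridian, so $G_K$ is cyclic. Since sending every meridional generator to $1\in\mathbb{Z}$ respects all the Wirtinger relations, $G_K$ also surjects onto $\mathbb{Z}$, and a cyclic group that surjects onto $\mathbb{Z}$ is infinite cyclic. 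Thus $G_K\cong\mathbb{Z}$.

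For the second step: by Satoh \cite{MR1758871}, $\pi_1\!\bigl(\mathbb{R}^4\setminus\mathrm{Tube}(K)\bigr)\cong G_K$, so the complement of $\mathrm{Tube}(K)$ has infinite cyclic fundamental group. Since $\mathrm{Tube}(K)$ is a ribbon torus by construction, I would conclude by invoking the unknotting theorem for ribbon tori: a ribbon torus in $\mathbb{R}^4$ whose complement has infinite cyclic fundamental group is ambient isotopic to the standard (unknotted) torus. This gives the statement.

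The main obstacle is this last input. The analogous statement for ribbon $2$-spheres — a ribbon $2$-knot with group $\mathbb{Z}$ is trivial — is classical, and I expect the torus version to be available in the surface-knot literature; the delicate point is locating and invoking it correctly. An alternative route avoids the black box at the cost of a more hands-on argument: starting from the one-overbridge diagram, one follows Satoh's tube construction and checks that each tube coming from a non-seed strand threads the seed strand's tube along a trivial ribbon $1$-handle — this is precisely where the hypothesis that all arrowtails lie on a single strand (Lemma 3.1) is used — so that $\mathrm{Tube}(K)$ is isotopic to the torus obtained by tubing the seed strand alone, which is standard. Making ``trivial ribbon $1$-handle'' precise, i.e.\ verifying that no essential clasping is introduced, is the crux of this approach, and it carries the same content as the unknotting theorem.
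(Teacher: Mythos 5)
Your first step is fine: with a single overbridge every arrowtail lies on one strand $b$, the Wirtinger relations identify every generator with $b$, and $G_K\cong\mathbb{Z}$; Satoh's computation of $\pi_1(\mathbb{R}^4\setminus \mathrm{Tube}(K))$ then gives a complement with infinite cyclic fundamental group. The gap is the final input you flag yourself: the ``unknotting theorem for ribbon tori'' --- a ribbon torus in $\mathbb{R}^4$ whose complement has group $\mathbb{Z}$ is ambient isotopic to the standard torus --- is not a theorem you can cite. The genuine analogue for ribbon $2$-knots is Yanagawa's theorem, but no such smooth statement is established for ribbon torus-knots; the closest available result (Hillman--Kawauchi) unknots genus-one surfaces in $S^4$ with infinite cyclic group only in the topological category, and the smooth version, like the injectivity of the Tube map itself, is not known. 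Since the proposition asserts ambient isotopy to the unknotted torus, your argument as written reduces the proposition to an unproven (plausibly open) statement, and your fallback sketch (``each tube threads the seed tube along a trivial ribbon $1$-handle'') is exactly the content you would need to prove and is left unexecuted.

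The paper sidesteps $\pi_1$ entirely and works at the level of diagrams: all arrowtails of a one-overbridge Gauss diagram lie on a single strand, so the welded move (overcrossings commute) lets one permute them freely; bringing the appropriate arrowtail next to the first arrowhead beyond the overbridge cancels that chord by a Reidemeister I move, and iterating empties the diagram. Hence $K$ is \emph{welded} equivalent to the unknot, and Satoh's theorem that welded equivalent diagrams yield ambient isotopic ribbon tori finishes the proof. If you want to salvage your approach, you should replace the group-theoretic unknotting criterion with this combinatorial reduction to welded triviality.
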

\begin{proof}
Since vb$(K) = 1,$ there exists a Gauss diagram for $K$ with one overbridge, which is a sequence of arrowtails without any arrowheads. We can then repeatedly apply the welded move to unhook every pair of crossed arrowtails one by one to obtain a Gauss diagram with only parallel chords. By a repeated applications of the Reidemeister I move, we obtain the empty Gauss diagram for the unknot. This means that $K$ is welded equivalent to the unknot, and by Satoh's result, $Tube(K)$ is ambient isotopic to the unknotted torus.
\end{proof}

A $\mu$-component link $L$ in $S^3$ is said to be \textit{weakly superslice} if it is bounds a smooth planar surface properly embedded $F$ in $B^4$ such that the double of $F$ along $L$ produces an unknotted surface of genus $\mu-1$ in $S^4$. Now, from our table of upper bounds for virtual bridge numbers, we can select a knot $K$ with vb$(K) = 1$. By Proposition 4.5, $Tube(K)$ is unknotted, and interesting nontrivial links can arise as its cross-sections. 
\begin{exmp}
Consider the virtual bridge number one knot $K$ with Gauss code O1-O2-O3-U1-O4-U3-O5-U6-U2-U5-U4-O6-. The equatorial cross-section $L$ of $Tube(K)$ is depicted in Figure 12. SnapPy identified $L$ as L13n2916 from the link table. It follows by Proposition 4.5, that L13n2916 is weakly superslice.

\begin{figure}[!ht]
  \centering
    \includegraphics[width=0.6\textwidth]{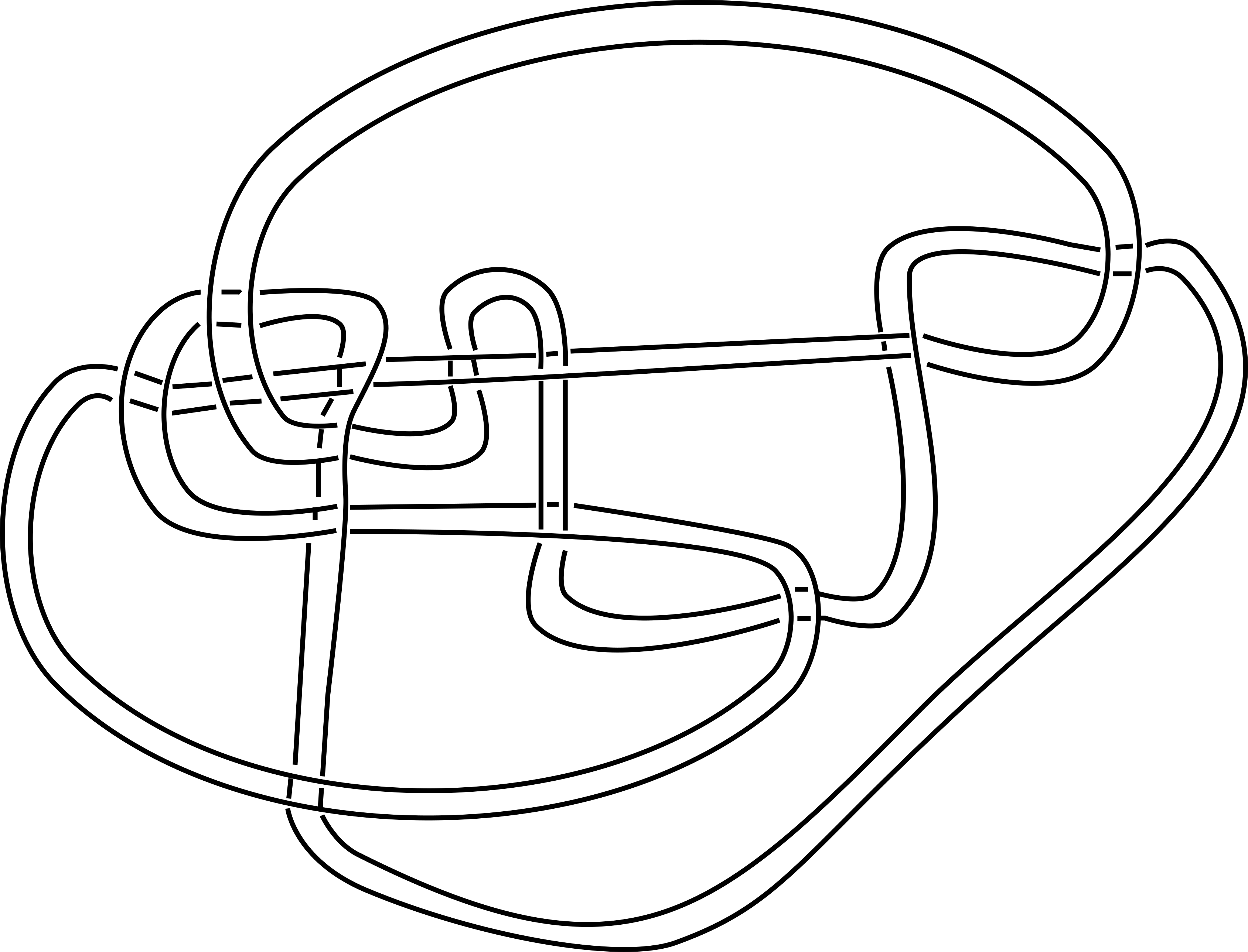}
        \caption{L13n2916.}
\end{figure}
\end{exmp}

\subsection*{Acknowledgments} I thank all reviewers for their thorough critiques. I am grateful to Maggy Tomova, and Ryan Blair for helpful discussions; and to Katie Burke for helping me create the table of upper bounds for the Wirtinger numbers of virtual knots.


\bibliographystyle{plain}

\end{document}